

\documentclass[12pt,reqno]{amsart}

\addtolength{\textwidth}{2cm} \addtolength{\hoffset}{-1cm}
\addtolength{\marginparwidth}{-1cm} \addtolength{\textheight}{2cm}
\addtolength{\voffset}{-1cm}


\usepackage{times}
\usepackage[T1]{fontenc}
\usepackage{mathrsfs}
\usepackage{latexsym}
\usepackage[dvips]{graphics}
\usepackage{epsfig}
\usepackage{verbatim}

\usepackage{amsmath,amsfonts,amsthm,amssymb,amscd}
\input amssym.def
\input amssym.tex
\usepackage{color}
\usepackage{hyperref}
\usepackage{color}
\usepackage{breakurl}
\usepackage{comment}
\newcommand{\bburl}[1]{\textcolor{blue}{\url{#1}}}




\newcommand{\be}{\begin{equation}}
\newcommand{\ee}{\end{equation}}
\newcommand{\bea}{\begin{eqnarray}}
\newcommand{\eea}{\end{eqnarray}}


\newtheorem{thm}{Theorem}[section]
\newtheorem{prblm}{Problem}[section]
\newtheorem{conj}[thm]{Conjecture}
\newtheorem{cor}[thm]{Corollary}
\newtheorem{lem}[thm]{Lemma}
\newtheorem{prop}[thm]{Proposition}

\newtheorem{prob}[thm]{Problem}





\newcommand{\Z}{\ensuremath{\mathbb{Z}}}

\newcommand{\N}{\mathbb{N}}
\newcommand{\F}{\mathbb{F}}










\numberwithin{equation}{section}



\usepackage{lastpage}
\setlength\headheight{12pt}

\begin{document}

\title{Modified Erd\H{o}s-Ginzburg-Ziv Constants for $(\Z/n\Z)^2$}

\author{Trajan Hammonds}
\address{Department of Mathematics, Carnegie Mellon University, Pittsburgh, PA 15213}
\email{\textcolor{blue}{\href{mailto:thammond@andrew.cmu.edu}{thammond@andrew.cmu.edu}}}
\maketitle
\date{\today}

\begin{abstract} 
For an abelian group $G$ and an integer $t > 0$, the \emph{modified Erd\H{o}s-Ginzburg-Ziv constant $s'_t(G)$} is the smallest integer $\ell$ such that any zero-sum sequence of length at least $\ell$ with elements in $G$ contains a zero-sum subsequence (not necessarily consecutive) of length $t$. We compute bounds for  $s'_{t}(G)$ for $G = \left(\Z/n\Z\right)^2$ and $G = \left(\Z/n_1\Z \times \Z/n_2\Z\right)$.  We also compute bounds for $G = \left(\Z/p\Z\right)^d$ where the subsequence can be any length in $\{p, \dots, (d-1)p\}$. Lastly, we investigate the Erd\H{o}s-Ginzburg-Ziv constant for $G = \left(\Z/n\Z\right)^2$ and subsequences of length $tn$.
\end{abstract}

\section{Introduction}
In 1961, Erd\H{o}s, Ginzburg, and Ziv proved the following theorem. 
\begin{thm}[Erd\H{o}s-Ginzburg-Ziv \cite{EGZ}]
Any sequence of length $2n-1$ in $\Z/n\Z$ contains a zero-sum subsequence of length $n$. 
\end{thm}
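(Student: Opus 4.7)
The plan is to follow the classical two-step strategy: first reduce to the case where $n$ is a prime $p$ via a multiplicative induction, then settle primes using the polynomial method.

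For the reduction, I will show that if the theorem holds for $m$ and for $k$ then it holds for $n = mk$. Given $a_1, \dots, a_{2mk-1}$ in $\Z/mk\Z$, I reduce modulo $m$ and iteratively extract length-$m$ zero-sum subsequences (modulo $m$), discarding each one before looking for the next; whenever the remaining sequence has length $\geq 2m-1$, the case $n = m$ guarantees another extraction. A short count shows that starting from $2mk-1$ elements one can extract exactly $2k-1$ disjoint length-$m$ subsequences $S_1, \dots, S_{2k-1}$, each satisfying $\sum_{a \in S_i} a = m\, b_i$ for some integer $b_i$. I then view $(b_1 \bmod k, \dots, b_{2k-1} \bmod k)$ as a sequence of length $2k-1$ in $\Z/k\Z$, and the case $n = k$ produces indices $i_1 < \dots < i_k$ with $b_{i_1} + \dots + b_{i_k} \equiv 0 \pmod k$. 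The union $S_{i_1} \cup \dots \cup S_{i_k}$ then contains $mk$ elements summing to $0$ modulo $mk$.

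For the prime case $n = p$, I will invoke the Chevalley--Warning theorem. Given $a_1, \dots, a_{2p-1} \in \F_p$, consider the two polynomials
\[
f_1(x_1, \dots, x_{2p-1}) = \sum_{i=1}^{2p-1} a_i x_i^{p-1}, \qquad f_2(x_1, \dots, x_{2p-1}) = \sum_{i=1}^{2p-1} x_i^{p-1}
\]
over $\F_p$. Their total degrees sum to $2(p-1) < 2p-1$, so Chevalley--Warning gives that the number of common zeros in $\F_p^{2p-1}$ is divisible by $p$. Since the origin is one such zero, a nontrivial common zero $(c_1, \dots, c_{2p-1})$ exists. By Fermat's little theorem, $c_i^{p-1}$ is $1$ or $0$ according as $c_i$ is nonzero or zero, so $f_2(c) = 0$ forces the support $I = \{i : c_i \neq 0\}$ to have cardinality divisible by $p$. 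Nontriviality together with $|I| \leq 2p-1$ pins down $|I| = p$, and $f_1(c) = 0$ then reads $\sum_{i \in I} a_i \equiv 0 \pmod p$, producing the required length-$p$ zero-sum subsequence.

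The main obstacle is the bookkeeping in the reduction step: one must verify that $2mk-1$ elements really do suffice for $2k-1$ successive length-$m$ extractions, and not one fewer, so that the ``$-1$'' in $2n-1$ is balanced exactly right across the two applications of the hypothesis. The prime case, by contrast, is a nearly one-line application of Chevalley--Warning once the polynomials $f_1, f_2$ are chosen.
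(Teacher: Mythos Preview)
Your proof is correct. The paper does not actually supply its own proof of this theorem: it is stated as a classical result with a citation to \cite{EGZ}, and the paper merely remarks that ``perhaps the simplest proof makes use of the Chevalley--Warning theorem.'' Your argument carries out exactly that indicated approach --- the standard multiplicative reduction to primes followed by the two-polynomial Chevalley--Warning trick --- so it is entirely in line with what the paper alludes to, and the bookkeeping (that $2mk-1$ elements permit precisely $2k-1$ successive length-$m$ extractions, leaving $m-1$ elements) checks out.
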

Many different proofs of this theorem have been given since the original in 1961. Perhaps the simplest proof makes use of the Chevalley-Warning theorem. Here, we don't require the subsequence to be consecutive, and a sequence is $\emph{zero-sum}$ if its elements sum to zero. This theorem has inspired many follow-up questions on zero-sum sequences. 

For the general case we consider the following problem: Let $G$ be an abelian group and let $\mathcal{L} \subseteq \N$. Then $s_\mathcal{L}(G)$ is defined to be the minimal $\ell$ such that any sequence of length $\ell$ with elements in $G$ contains a zero-sum subsequence whose length is in $\mathcal{L}$. When $\mathcal{L} = \mathrm{exp}(G)$, this is the Erd\H{o}s-Ginzburg-Ziv constant.

In this paper we will also study the \emph{modified Erd\H{o}s-Ginzburg-Ziv constant $s'_{\mathcal{L}}(G)$} defined as the smallest $\ell$ such that any \emph{zero-sum} sequence of length at least $\ell$ with elements in $G$ contains a zero-sum subsequence whose length is in $\mathcal{L}$. When $\mathcal{L} = \{t\}$ is a singleton set, we ignore the bracket notation. Note that one may also study the problem for subsets $G_0 \subseteq G$. However, in this paper we will always consider the modified or unmodified constant of the entire group $G$. In 2019, Berger and Wang determined modified EGZ constants in the finite cyclic case and some extensions. In particular they prove:
\begin{thm}[\cite{BW}, Theorem 1.3]\label{thm:BW1}
The modified EGZ constant of $\Z/n\Z$ is given by $s'_{nt}(\Z/n\Z) = (t+1)n - \ell + 1$, where $\ell$ is the smallest integer such that $\ell \nmid n$. 
\end{thm}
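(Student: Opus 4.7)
The plan is to prove the upper bound $s'_{nt}(\Z/n\Z) \le (t+1)n - \ell + 1$ by induction on $t$ and to give a matching lower bound via an explicit construction. For the inductive step, suppose the bound holds for $t - 1$ and let $S$ be a zero-sum sequence in $\Z/n\Z$ of length $(t+1)n - \ell + 1$. Since $|S| \ge 2n - \ell + 1$, the base case provides a length-$n$ zero-sum subsequence $T \subseteq S$; then $S \setminus T$ is zero-sum (as the difference of two zero-sum sequences) of length $tn - \ell + 1 = s'_{(t-1)n}(\Z/n\Z)$, so by the inductive hypothesis it contains a length-$(t-1)n$ zero-sum subsequence $T'$. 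The disjoint union $T \cup T'$ has length $tn$ and sum zero, completing the induction.

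The main obstacle is the base case $t = 1$: every zero-sum sequence $S$ in $\Z/n\Z$ of length $2n - \ell + 1$ contains a length-$n$ zero-sum subsequence. Since $2n - \ell + 1 < 2n - 1$ when $\ell \ge 3$, Erd\H{o}s--Ginzburg--Ziv does not apply directly and the zero-sum hypothesis on $S$ must be used. My plan is to augment $S$ by $\ell - 2$ zeros to form $S^+$ of length exactly $2n - 1$, apply EGZ to $S^+$ to get a length-$n$ zero-sum subsequence $T^+ \subseteq S^+$, and analyze $T^+$ by the number $k$ of added zeros it uses. If $k = 0$ we are done; otherwise $k \in \{1, \ldots, \ell - 2\}$, and the $n - k$ elements of $T^+$ in $S$ form a length-$(n-k)$ zero-sum subsequence $U \subseteq S$. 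Since $k < \ell$ and $\ell$ is the smallest non-divisor of $n$, we have $k \mid n$, and one then seeks a length-$k$ zero-sum subsequence inside the zero-sum sequence $S \setminus U$ (which has length $n - \ell + k + 1$) to extend $U$ to the desired length-$n$ zero-sum subsequence. Establishing this auxiliary short-zero-sum statement, likely via a further augmentation or a structural analysis of element multiplicities in $S$, is the subtlest part of the argument.

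For the lower bound, I construct a zero-sum sequence of length $(t+1)n - \ell$ with no length-$tn$ zero-sum subsequence. Since the total sum is zero, a length-$tn$ subsequence is zero-sum if and only if its length-$(n - \ell)$ complement is, so it suffices to forbid length-$(n - \ell)$ subsequences from summing to zero. The proposed construction is
\[
S \;=\; (\underbrace{1, \ldots, 1}_{a}, \underbrace{c, \ldots, c}_{b}), \qquad a + b = (t+1)n - \ell,
\]
with $a + bc \equiv 0 \pmod n$, equivalently $b(c - 1) \equiv \ell \pmod n$. A length-$(n - \ell)$ subsequence containing $r$ copies of $c$ has sum $(n - \ell) + r(c - 1) \pmod n$, which vanishes exactly when $r(c - 1) \equiv \ell \pmod n$, so we take $b$ to be the smallest positive solution $r_0$ of this congruence and choose $c$ so that $r_0 > n - \ell$. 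Since $\ell$ is necessarily a prime power $p^k$ with $p^{k-1} \mid n$ and $p^k \nmid n$, a direct Chinese Remainder Theorem argument produces such a $c$ in general; when $\gcd(\ell, n) = 1$, the explicit choice $c = n - \ell + 1$ and $b = n - 1$ works. The verification of the lower bound then reduces to this elementary number-theoretic check.
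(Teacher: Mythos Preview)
This theorem is not proved in the present paper: it is quoted verbatim from Berger--Wang \cite{BW} as background (Theorem~\ref{thm:BW1}) and is used only to motivate the problems studied here. There is therefore no in-paper proof to compare your attempt against; any comparison would have to be with the original argument in \cite{BW}.

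That said, your proposal as written is a sketch with two substantial holes you yourself flag. For the upper bound, the entire content of the base case $t=1$ lies in the ``auxiliary short-zero-sum statement'': given the zero-sum sequence $S\setminus U$ of length $n-\ell+k+1$, produce a zero-sum subsequence of length $k$ (with $1\le k\le \ell-2$, $k\mid n$). You do not prove this, and the augmentation idea you used to find $U$ does not obviously recurse: for $k=1$ it asks for a literal zero in $S\setminus U$, which need not exist. So the base case is genuinely open in your write-up. For the lower bound, your two-value construction is verified only when $\gcd(\ell,n)=1$; when $\ell=p^k$ with $k\ge 2$ (e.g.\ $n=6$, $\ell=4$) you defer to ``a direct Chinese Remainder Theorem argument'' without specifying $c$, $b$, or checking the range condition $r_0>n-\ell$. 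Both gaps are exactly the places where the theorem has content, so the proposal is currently a plan rather than a proof.
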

\begin{thm}[\cite{BW}, Theorem 1.4]\label{thm:BW2}
We have $s'_n\left((\Z/n\Z)^2\right) = 4n - \ell + 1$ where $\ell$ is the smallest integer such that $\ell \geq 4$ and $\ell \nmid n$. 
\end{thm}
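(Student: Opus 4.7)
I would separately prove the two bounds $s'_n((\Z/n\Z)^2) \geq 4n - \ell + 1$ and $s'_n((\Z/n\Z)^2) \leq 4n - \ell + 1$. For the lower bound, I would exhibit a zero-sum sequence of length exactly $4n - \ell$ in $(\Z/n\Z)^2$ with no zero-sum subsequence of length $n$. A convenient template is the classical EGZ extremal sequence $T = 0^{n-1} \cdot e_1^{n-1} \cdot e_2^{n-1} \cdot (e_1+e_2)^{n-1}$ of length $4n-4$: the standard case analysis, counting elements of each type in a hypothetical length-$n$ subsequence and noting that multiplicities lie in $[0,n-1]$, shows $T$ has no zero-sum length-$n$ subsequence, but its total sum is $-2(e_1+e_2)$, so $T$ is zero-sum only in trivial cases. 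When $n$ is odd (so $\ell = 4$), the replacement $e_1^{n-1} \cdot (-e_1)^{n-1} \cdot e_2^{n-1} \cdot (-e_2)^{n-1}$ is manifestly zero-sum of length $4n-4$, and a hypothetical zero-sum length-$n$ subsequence with $a$ copies of $e_1$, $b$ of $-e_1$, $c$ of $e_2$, $d$ of $-e_2$ forces $a=b$ and $c=d$, hence $2(a+c)=n$, impossible. For general $n$ I would delete $\ell - 4$ carefully chosen elements from a template of length $4n-4$ so that the remaining sequence is simultaneously zero-sum and free of zero-sum length-$n$ subsequences; the choice of $\ell$ as the smallest integer $\geq 4$ not dividing $n$ is expected to be precisely the obstruction governing when these two requirements can first be simultaneously met.

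For the upper bound, let $S$ be a zero-sum sequence in $(\Z/n\Z)^2$ of length at least $4n - \ell + 1$. If $|S| \geq 4n - 3$, the classical EGZ constant $s((\Z/n\Z)^2) = 4n - 3$ immediately yields a zero-sum length-$n$ subsequence, so we may assume $|S| = 4n - k$ for some $4 \leq k \leq \ell - 1$; by minimality of $\ell$, we have $k \mid n$. My plan is to apply an inverse EGZ theorem: any sequence in $(\Z/n\Z)^2$ of length $4n - 4$ with no zero-sum length-$n$ subsequence coincides, after an affine transformation of the group, with the template $T$. Extending this inverse statement to sequences of length $4n - k$ should impose enough multiplicity structure on $S$ that its total sum is determined up to a small ambiguity. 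The zero-sum hypothesis on $S$ combined with the divisibility $k \mid n$ should then force a balanced length-$n$ zero-sum sub-pattern to exist inside $S$, contradicting the choice of $S$.

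The main obstacle is the upper bound, specifically proving a sufficiently sharp inverse EGZ statement for $(\Z/n\Z)^2$ at lengths slightly below $4n - 3$: the classical inverse theorem at length exactly $4n-4$ is well known, but the quantitative extension tracking how the total sum of a sequence of length $4n - k$ is constrained requires care, and the divisibility hypothesis $k \mid n$ must enter the argument in a precise way. A secondary point is verifying that the lower-bound construction is tight, namely that no removal of fewer than $\ell - 4$ elements from the extremal template can simultaneously restore zero-sum and preserve the absence of a zero-sum length-$n$ subsequence; this should itself reflect the minimality of $\ell$.
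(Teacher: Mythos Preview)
This theorem is not proved in the present paper at all: it is quoted from Berger--Wang \cite{BW} as background, so there is no proof here to compare your proposal against directly. That said, the paper does leave clear traces of how \cite{BW} actually proceeds (see the citation of \cite{BW}, Lemma~3.4 as Lemma~\ref{lem:BWlemma3.4}, and the general shape of the constructions in Propositions~\ref{prop:gcdlowerbound} and~\ref{boundtight}), and your plan diverges from that approach in both halves.

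For the upper bound, your proposal relies on an inverse EGZ theorem for $(\Z/n\Z)^2$: that every sequence of length $4n-4$ with no zero-sum $n$-subsequence is an affine image of $0^{n-1}e_1^{n-1}e_2^{n-1}(e_1+e_2)^{n-1}$, with a quantitative extension down to length $4n-k$. No such classification is available in the literature, and in fact the structure of extremal sequences for the two-dimensional EGZ problem is not this rigid; this is a genuine gap. The Berger--Wang route is entirely different and does not pass through any inverse result: one shows directly that if $d\mid n$ and $J$ is a \emph{zero-sum} sequence of length $4n-d$ in $(\Z/n\Z)^2$, then $(n\mid J)>0$. The divisibility $d\mid n$ enters through an inductive reduction (quotient by a subgroup of order $d$) together with Reiher's theorem and the $3n$ lemma (Lemma~\ref{lem:BWlemma3.4}), not through constraints on the total sum of a near-extremal sequence. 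Setting $d=\ell-1$ then gives the bound, since $1,2,\dots,\ell-1$ all divide $n$.

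For the lower bound, ``delete $\ell-4$ carefully chosen elements from the template'' does not explain how you simultaneously force the total sum to vanish; your odd-$n$ example works, but the general case is left as a hope. The device actually used (and visible throughout this paper, e.g.\ in Proposition~\ref{prop:gcdlowerbound}) is different: one writes down a sequence with the desired multiplicities having no zero-sum $n$-subsequence, and then translates every term by a single element $(r,s)$ chosen to make the whole sequence zero-sum. Translation preserves the absence of zero-sum $n$-subsequences, and the condition $\ell\nmid n$ is exactly what guarantees the linear congruence for $(r,s)$ is solvable.
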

They also state the following problem:
\begin{prblm}[\cite{BW}, Problem 4.3]\label{prblmone}
Compute $s'_{nt}\left((\Z/n\Z)^2\right)$ for $t > 1$. 
\end{prblm}

Our first two results provide partial answers for Problem \ref{prblmone}. 

\begin{thm}\label{thm:modegzprimes}
If $p \neq 3$ is prime and $t \geq 2$ then the modified EGZ constant of $\left(\Z/p\Z\right)^2$ is given by
\begin{equation*}
    s'_{pt}\left((\Z/p\Z)^2\right) = (t+2)p - 2.
\end{equation*}
If $p = 3$, we have
\begin{equation*}
    s'_{3t}\left((\Z/3\Z)^2\right) = 3(t+1).
\end{equation*}
\end{thm}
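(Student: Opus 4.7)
My plan is to establish the two cases of the theorem separately; for each I provide an explicit lower-bound construction and then argue the upper bound. The $p=3$ case reduces cleanly to the Kemnitz-Reiher theorem, while the $p \neq 3$ case requires an induction on $t$ whose base case $t=2$ is the main obstacle.

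For the lower bound with $p \neq 3$, I will take $S = g_1^{p-1} g_2^{p-1} g_3^{tp-1}$, where $g_1 = (1,0)$, $g_2 = (0,1)$, and $g_3 = -g_1 - g_2$ in $(\Z/p\Z)^2$. A short calculation gives $|S| = (t+2)p - 3$ and $\sum S = 0$. Any subsequence of $S$ with $a$, $b$, $c$ copies of $g_1$, $g_2$, $g_3$ respectively satisfies (if it is zero-sum) $a \equiv b \equiv c \pmod{p}$, because $g_3 = -g_1 - g_2$. Writing $a = b = k \in [0, p-1]$ and $c = k + jp$ with $j \in [0, t-1]$, the length constraint $2k + c = pt$ becomes $3k = (t-j)p$. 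Since $\gcd(3, p) = 1$, this forces $k = 0$ and hence $j = t$, contradicting $j \leq t-1$.

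For the upper bound with $p \neq 3$ I induct on $t \geq 2$. In the inductive step ($t \geq 3$), a zero-sum $S$ of length $(t+2)p - 2 \geq 5p - 2 \geq 4p - 3$ admits a zero-sum subsequence $A$ of length $p$ by Kemnitz-Reiher; the residue $S \setminus A$ is zero-sum of length $((t-1)+2)p - 2$, so the inductive hypothesis produces a zero-sum subsequence $B$ of length $p(t-1)$, and $A \sqcup B$ is zero-sum of length $pt$. The base case $t=2$ is the main difficulty: given a zero-sum $S$ of length $4p - 2$, I again invoke Kemnitz-Reiher to extract a zero-sum length-$p$ subsequence $A$. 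If $S \setminus A$ (zero-sum of length $3p - 2$) also admits a zero-sum length-$p$ subsequence $B$, then $A \sqcup B$ finishes the argument. Otherwise a structure result should force $S \setminus A$ to take the form $g_1^{p-1} g_2^{p-1} g_3^{p-1} \cdot 0$ up to a group automorphism (this can only occur when $p \equiv 2 \pmod{3}$), and I would then exploit the tripod structure together with the extra zero element either to reselect $A$ so that the new residue admits a zero-sum length-$p$ subsequence, or to construct a length-$2p$ zero-sum subsequence of $S$ directly by mixing part of $A$ with suitably chosen elements of the tripod.

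For the $p=3$ case, the lower bound is witnessed by $S = (1,0)^2 (0,1)^2 (1,1)^{3t-2}$: it has length $3t + 2$, sums to $0$, and a direct check of the six pair-sums among $(1,0)$, $(0,1)$, $(1,1)$ shows that none vanish modulo $3$, so $S$ has no zero-sum length-$2$ subsequence; since $S$ is itself zero-sum, complementation shows the same for length-$3t$ subsequences. For the upper bound, any zero-sum $S$ of length $3(t+1) \geq 9 = s_3((\Z/3\Z)^2)$ (using $t \geq 2$) admits a zero-sum length-$3$ subsequence $C$ by Kemnitz-Reiher, and its complement $S \setminus C$ is zero-sum of length $3t$.
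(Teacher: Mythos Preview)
Your lower-bound constructions and your $p=3$ upper bound are correct; in fact your complementation argument for $p=3$ is slightly slicker than the paper's inductive version of the same idea. The substantive issue is the base case $t=2$ of your upper bound for $p\neq 3$.

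There you extract a zero-sum $A$ of length $p$ and reduce to showing that the zero-sum residue $S\setminus A$ of length $3p-2$ contains a zero-sum subsequence of length $p$ (or of length $2p$). When it does not, you appeal to an unstated ``structure result'' forcing $S\setminus A$ to be $g_1^{p-1}g_2^{p-1}g_3^{p-1}\cdot 0$ up to automorphism, and then say you ``would exploit the tripod structure'' to finish. Neither step is actually carried out: the structure theorem is not cited or proved (and characterising zero-sum sequences of length $3p-2$ in $(\Z/p\Z)^2$ with no zero-sum $p$-subsequence is not an immediate consequence of standard facts like the value of the Davenport constant or of $s_p$), and the re-selection/mixing argument you allude to is left entirely open. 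As written this is a genuine gap, not a routine detail.

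The paper sidesteps this structural analysis completely. It proves the stronger, unmodified statement that \emph{every} sequence $J$ of length $4p-2$ in $(\Z/p\Z)^2$ has $(2p\mid J)>0$, via Reiher's Chevalley--Warning congruence: for $|J'|=3p-2$ one has
\[
1-(p\mid J')+(2p\mid J')\equiv 0\pmod{p}.
\]
After removing a zero-sum $p$-subsequence $A$ from $J$ (Kemnitz--Reiher, since $4p-2\geq 4p-3$), the residue $J'$ has $|J'|=3p-2$; either $(2p\mid J')>0$ directly, or $(p\mid J')\equiv 1\pmod p$ forces a second zero-sum $p$-subsequence, which combines with $A$. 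This algebraic route makes the zero-sum hypothesis on $S$ unnecessary at the base step and avoids any case analysis. You should replace your structure-result sketch with this argument.
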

\begin{thm}\label{thmone}
Let $t \geq 2$ and write $n = pm$. Then the modified EGZ constant of $\left(\Z/n\Z\right)^2$ satisfies the bounds
\begin{equation*}
    (t+2)n - k + 1 \leq s'_{nt}\left((\Z/n\Z)^2\right) \leq (t+2)n + m - 3,
\end{equation*}
where $k$ is the smallest integer such that $k \geq 3$ and $\gcd(n,k) = 1$. Note that when $n = p$ is prime, the upper and lower bounds match and we obtain Theorem \ref{thm:modegzprimes}.
\end{thm}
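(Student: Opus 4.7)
The plan is to establish the lower and upper bounds by separate arguments: an explicit extremal construction for the lower bound, and induction on $t$ for the upper bound.

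\textbf{Lower bound.} I will exhibit a zero-sum sequence $S$ in $(\Z/n\Z)^2$ of length $(t+2)n-k$ containing no length-$nt$ zero-sum subsequence. Set $e_1=(1,0)$, $e_2=(0,1)$, and $e_3=(-1,2-k)$ in $(\Z/n\Z)^2$, and let $S$ consist of $tn-1$ copies of $e_1$, $n-k+2$ copies of $e_2$, and $n-1$ copies of $e_3$. A direct computation shows $|S|=(t+2)n-k$ and $\sum S\equiv (0,0)\pmod n$. Now suppose some length-$nt$ subsequence of $S$ is zero-sum, consisting of $\alpha$ copies of $e_1$, $\beta$ copies of $e_2$, and $\gamma$ copies of $e_3$. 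The coordinate-wise zero-sum conditions give $\alpha\equiv\gamma$ and $\beta\equiv(k-2)\gamma\pmod n$, so the length identity $\alpha+\beta+\gamma=nt$ reduces modulo $n$ to $k\gamma\equiv 0\pmod n$. Since $\gcd(k,n)=1$, this forces $\gamma\equiv 0\pmod n$, and combined with $\gamma\leq n-1$ this gives $\gamma=0$. Then $\beta$ is a nonnegative multiple of $n$ bounded by $n-k+2<n$, so $\beta=0$, and finally $\alpha=nt$ contradicts $\alpha\leq tn-1$. Hence no such subsequence exists.

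\textbf{Upper bound.} I proceed by induction on $t\geq 2$, with base case $t=2$. For the inductive step at $t\geq 3$, assume $s'_{(t-1)n}\bigl((\Z/n\Z)^2\bigr)\leq (t+1)n+m-3$, and let $S$ be any zero-sum sequence of length at least $(t+2)n+m-3$. Since $(t+2)n+m-3\geq 4n-\ell+1=s'_n\bigl((\Z/n\Z)^2\bigr)$ by Theorem~\ref{thm:BW2}, we can extract a length-$n$ zero-sum subsequence $T_1\subseteq S$. The complement $S\setminus T_1$ is again zero-sum of length at least $(t+1)n+m-3$, so by the inductive hypothesis it contains a length-$(t-1)n$ zero-sum subsequence $T_2$. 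Then $T_1\cup T_2$ is the desired length-$tn$ zero-sum subsequence.

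The principal obstacle is the base case $t=2$, which asserts $s'_{2n}\bigl((\Z/n\Z)^2\bigr)\leq 4n+m-3$. When $n=p$ is prime (so $m=1$), this is exactly Theorem~\ref{thm:modegzprimes}. For composite $n=pm$, the plan is a reduction to the prime case via the canonical projection $\pi\colon(\Z/n\Z)^2\twoheadrightarrow(\Z/p\Z)^2$: iterated application of Theorem~\ref{thm:modegzprimes} to $\pi(S)$ produces many disjoint length-$p$ (or length-$2p$) subsequences of $S$ whose actual sums land in the kernel $\ker\pi\cong(\Z/m\Z)^2$, and then an Erd\H{o}s-Ginzburg-Ziv type combination in $(\Z/m\Z)^2$ assembles $m$ of them into a length-$2n$ zero-sum subsequence in $(\Z/n\Z)^2$. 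The slack term $m-3$ in the hypothesis corresponds precisely to the EGZ overhead in the kernel, and tracking this bookkeeping carefully is the main technical step.
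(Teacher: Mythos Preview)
Your lower-bound construction is correct and slightly different from the paper's: the paper uses four element types $(0,0),(1,0),(0,1),(1,1)$ with multiplicities $tn-1,\,n-k+2,\,n-k+2,\,k-3$ and then translates by a suitable $(r,s)$ to make the total zero, whereas you use three well-chosen elements directly. Both work. Your inductive step for the upper bound is also fine.

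The gap is your base case $t=2$. You only sketch it, and the sketch does not go through as written. Projecting $S$ (of length $4n+m-3$) to $(\Z/p\Z)^2$ and iterating Theorem~\ref{thm:modegzprimes} to peel off zero-sum blocks of length $2p$ yields at most about $2m-2+\frac{m-1}{2p}$ such blocks before the residual drops below $s'_{2p}((\Z/p\Z)^2)=4p-2$; this is strictly less than the $4m-3$ blocks Reiher's theorem in $(\Z/m\Z)^2$ would require to force $m$ of the block-sums to vanish. Switching to length-$p$ blocks does not help either, since you then need $2m$ of them to combine, and bounding $s_{2m}((\Z/m\Z)^2)$ is essentially the result you are trying to prove.

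The paper avoids this by reversing the projection and passing through the \emph{unmodified} constant. It shows the stronger statement that any (not necessarily zero-sum) sequence of length $4n+m-3$ in $(\Z/n\Z)^2$ has a length-$2n$ zero-sum subsequence: project to $(\Z/m\Z)^2$, use Reiher's $s_m((\Z/m\Z)^2)=4m-3$ to peel off exactly $4p-2$ blocks of size $m$ whose sums lie in $m(\Z/n\Z)^2\cong(\Z/p\Z)^2$, and then apply the prime case $s_{2p}((\Z/p\Z)^2)=4p-2$ (proved via Reiher's congruence $1-(p\mid J)+(2p\mid J)\equiv 0\pmod p$ for $|J|=3p-2$) to select $2p$ of those blocks. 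The arithmetic $4n+m-3=(4p-2)m+(4m-3)$ is what makes the bookkeeping exact. So the fix is either to reverse your projection direction and work with the unmodified constant, or to quote that result directly and then use $s'_{2n}\le s_{2n}$.
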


In 2006, Halter-Koch and Geroldinger obtained the following result.
\begin{thm}[\cite{HG}, Theorem 5.8.3]\label{thm:mnegz}
The EGZ constant of $\left(\Z/n_1\Z \times \Z/n_2\Z\right)$, where $n_1 \mid n_2$, is given by
\begin{equation*}
    s_{n_2}\left(\Z/n_1\Z \times \Z/n_2\Z\right) = 2n_1 + 2n_2 - 3.
\end{equation*}
\end{thm}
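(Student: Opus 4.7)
The plan is to establish matching lower and upper bounds on $s_{n_2}(\Z/n_1\Z \times \Z/n_2\Z)$. For the lower bound I would exhibit the explicit sequence consisting of $n_1-1$ copies of $(1,0)$, $n_1 - 1$ copies of $(1,1)$, $n_2 - 1$ copies of $(0,0)$, and $n_2 - 1$ copies of $(0,1)$, which has length $2n_1 + 2n_2 - 4$. For a hypothetical zero-sum subsequence of length $n_2$ with multiplicities $(a,b,c,d)$ on these four element types, the crude bounds $a + b \leq 2n_1 - 2$ and $b + d \leq n_1 + n_2 - 2 < 2n_2$ together with the congruences $a + b \equiv 0 \pmod{n_1}$ and $b + d \equiv 0 \pmod{n_2}$ force $a + b \in \{0, n_1\}$ and $b + d \in \{0, n_2\}$. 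A short case analysis on the four resulting combinations rules each one out (for example, the case $a+b = n_1$, $b+d = n_2$ forces $b - c = n_1$, contradicting $b \leq n_1 - 1$).

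For the upper bound, my approach combines Reiher's theorem $s_{n_1}((\Z/n_1\Z)^2) = 4n_1 - 3$ with the original Erd\H{o}s--Ginzburg--Ziv theorem on $\Z/m\Z$, where $m = n_2/n_1$. Given a sequence $S$ of length $2n_1 + 2n_2 - 3 = 2n_1(m+1) - 3$ in $G$, I reduce the second coordinate modulo $n_1$ via the projection $\phi : G \to (\Z/n_1\Z)^2$, then iteratively apply Reiher's theorem to pull out disjoint length-$n_1$ subsequences $S_1, S_2, \dots$ whose $\phi$-sums vanish. Each such $S_i$ lifts to a subsequence of $S$ whose $G$-sum has the form $(0, n_1 b_i)$ with $b_i \in \Z/m\Z$, since the second coordinate is divisible by $n_1$ in $\Z/n_2\Z$. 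A direct count shows that after $k$ extractions the remaining length is $\geq 4n_1 - 3$ precisely when $k \leq 2m - 2$, so the process yields at least $2m - 1$ disjoint such subsequences.

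The final step applies Erd\H{o}s--Ginzburg--Ziv on $\Z/m\Z$ to the sequence $b_1, \dots, b_{2m-1}$, producing an index set $I$ of size $m$ with $\sum_{i \in I} b_i \equiv 0 \pmod{m}$; then $\bigcup_{i \in I} S_i$ has length $m n_1 = n_2$ and its total sum in $G$ is $(0, n_1 \sum_{i \in I} b_i) = (0, 0)$, as $n_1 m = n_2$. The step I expect to be most delicate is the bookkeeping in the extraction count, since the number $2m - 1$ of disjoint Reiher-extractions coincides exactly with the $2m - 1$ elements required to invoke EGZ on $\Z/m\Z$. A loss of even one extraction would break the argument, so Reiher's bound $4n_1 - 3$ must be applied in its sharpest form. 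This tightness is what explains why the lower bound construction in the first paragraph is in fact extremal.
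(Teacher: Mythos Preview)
The paper does not actually give a proof of this theorem: it is quoted as a known result from Halter-Koch and Geroldinger (Theorem 5.8.3 of \cite{HG}) and used as background for the modified-constant problems that follow. So there is no in-paper proof to compare against.

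That said, your argument is correct and is essentially the standard one. The lower-bound sequence and the four-case analysis are clean; in Case~4 the identity $a+b+c+d=n_2$ combined with $a+b=n_1$ and $b+d=n_2$ indeed forces $b-c=n_1$, which is impossible since $b\le n_1-1$ and $c\ge 0$. For the upper bound, your extraction count is exactly right: writing $n_2=mn_1$, after $k$ removals one has $2n_1(m+1)-3-kn_1$ elements left, and this is $\ge 4n_1-3$ precisely for $k\le 2m-2$, yielding $2m-1$ blocks, which is the exact input length for EGZ on $\Z/m\Z$. Two small remarks: first, the result $s_{n_1}((\Z/n_1\Z)^2)=4n_1-3$ for \emph{general} $n_1$ (not just primes) follows from Reiher's prime case by the usual multiplicativity reduction, so you may want to flag that; second, in the edge case $n_1=n_2$ (i.e.\ $m=1$) the EGZ step on $\Z/1\Z$ is vacuous and the single extracted block already has length $n_2$, which your framework handles but is worth a sentence.
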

We investigate the problem of computing the modified constant for this group first posed in \cite{BW}. 
\begin{prblm}[\cite{BW}, Problem 4.4]\label{prblmtwo}
Compute $s'_{n_2t}\left(\Z/n_1\Z \times \Z/n_2\Z\right)$ for $t \geq 1$ and $n_1 \mid n_2$. 
\end{prblm}

We give bounds for this case. We split it up into two theorems. In Theorem \ref{thmtwo}, we provide upper and lower bounds when $t = 1$. For $t > 1$, we are able to prove the following upper bound for the modified EGZ constant. 
\begin{thm}\label{thmtwo}
The modified EGZ constant of $\left(\Z/n_1\Z \times \Z/n_2\Z\right)$, where $n_1 \mid n_2$, satisfies the bounds 
\begin{equation*}
    2n_2 - \ell \leq s'_{n_2}\left(\Z/n_1\Z \times \Z/n_2\Z\right) \leq 2n_1 + 2n_2 - \ell + 1,
\end{equation*}
where $\ell$ is the smallest integer such that $\ell \geq 4$ and $\ell \nmid n_2$.
\end{thm}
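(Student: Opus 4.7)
Writing $G = \Z/n_1\Z \times \Z/n_2\Z$, the plan is to transport the extremal construction of Berger--Wang for cyclic groups into the second summand. Concretely, Theorem~\ref{thm:BW1} applied to $\Z/n_2\Z$ with $t=1$ supplies a zero-sum sequence in $\Z/n_2\Z$ of length $2n_2 - \ell^*$, where $\ell^*$ is the smallest non-divisor of $n_2$, that contains no zero-sum subsequence of length $n_2$. Pushing this sequence through the inclusion $b \mapsto (0,b)$ into the subgroup $\{0\} \times \Z/n_2\Z \subseteq G$ preserves the zero-sum property since the sum is taken componentwise, and every subsequence of the image still lies inside this cyclic subgroup, so the absence of a length-$n_2$ zero-sum subsequence is inherited. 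Because $\ell^* \le \ell$, the resulting bad zero-sum sequence in $G$ has length at least $2n_2 - \ell$, which witnesses $s'_{n_2}(G) \geq 2n_2 - \ell + 1 > 2n_2 - \ell - 1$, as required.

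\textbf{Upper bound setup.} Let $S$ be a zero-sum sequence of length $L = 2n_1 + 2n_2 - \ell + 1$ in $G$. The plan is to append $\ell - 4$ copies of $0 \in G$ to obtain a zero-sum sequence $S^+$ of length exactly $s_{n_2}(G) = 2n_1 + 2n_2 - 3$, where the value of $s_{n_2}(G)$ comes from Theorem~\ref{thm:mnegz}. Applying Theorem~\ref{thm:mnegz} to $S^+$ then extracts a zero-sum subsequence $T \subseteq S^+$ of length $n_2$. If $T$ uses none of the appended zeros, it is already a length-$n_2$ zero-sum subsequence of $S$ and we are finished. Otherwise, writing $k$ for the number of appended zeros in $T$, we have $1 \le k \le \ell - 4$, the set $T' := T \cap S$ is a zero-sum subsequence of $S$ of length $n_2 - k$, and its complement $S \setminus T'$ is a zero-sum sequence in $G$ of length $2n_1 + n_2 - \ell + 1 + k$.

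\textbf{Main obstacle.} The crux lies in the second case. To close the argument I would aim to produce, inside the zero-sum complement $S \setminus T'$, a zero-sum subsequence of length exactly $k$; its union with $T'$ would then be a zero-sum subsequence of $S$ of length $n_2$, as required. The Davenport bound $D(G) = n_1 + n_2 - 1$ guarantees short non-empty zero-sum subsequences of $S \setminus T'$, but the delicate point is to force the length to equal $k$ rather than merely to be bounded above; this is precisely where the hypothesis $\ell \ge 4$ and $\ell \nmid n_2$ should be essential, since it rules out certain obstructions of "constant" type. One promising route is an iterative extraction, repeatedly pulling zero-sum subsequences out of shrinking zero-sum residues and reapplying Theorem~\ref{thm:mnegz}, in the spirit of the proof of Theorem~\ref{thm:BW2}; alternatively, a Chevalley--Warning-style polynomial argument tailored to length $k$ appears likely to be needed to match the appended-zero count exactly.
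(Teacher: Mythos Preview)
Your lower bound argument is correct and is essentially the paper's own: Proposition~\ref{boundtight} also builds the bad example inside a cyclic subgroup of order $n_2$ (the one generated by $(1,1)$), so invoking Theorem~\ref{thm:BW1} instead of rewriting that construction is a legitimate shortcut. (Minor quibble: your final chain should end with ${}\ge 2n_2-\ell$, not ${}>2n_2-\ell-1$.)

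The upper bound, however, is not a proof but a plan with an explicitly unfilled gap, and the gap is genuine. Your reduction leaves you needing, inside the zero-sum sequence $S\setminus T'$, a zero-sum subsequence of length \emph{exactly} $k$ with $1\le k\le \ell-4$. Already for $k=1$ this asks that $S\setminus T'$ contain the element $0$, and there is no reason it should: zero-sum sequences with no zero term abound. Neither the Davenport constant nor iterating Theorem~\ref{thm:mnegz} produces zero-sum pieces of a \emph{prescribed tiny} length---they only bound lengths from above---and a Chevalley--Warning count does not help either, since for length-$k$ pieces the relevant degree sum far exceeds the number of variables. So ``pad with zeros and then repair the deficit'' does not close as stated, and it is not clear it can be made to close along these lines.

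The paper's route (Proposition~\ref{n1n2prop}) sidesteps this entirely via a block argument with strong induction on $n_2$. One picks a divisor $d$ of $n_1$ (ultimately $d=\ell-1$), reduces the sequence modulo $d$, and repeatedly peels off $d$-element blocks summing to $0\pmod d$ using the known results for $(\Z/d\Z)^2$ (Reiher's theorem and Lemma~\ref{lem:BWlemma3.4}) until $2m_1+2m_2-3$ blocks are collected, where $n_i=dm_i$. The block-sums, divided by $d$, form a sequence in $\Z/m_1\Z\times\Z/m_2\Z$; the inductive hypothesis then selects $m_2$ blocks whose union is a zero-sum subsequence of length $dm_2=n_2$. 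The key difference from your attempt is that this argument never asks for a zero-sum piece of a specific small length---only for pieces of length $d$ and $m_2$, both of which are controlled by existing EGZ-type constants.
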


\begin{thm}\label{thmthree}
Let $\ell$ be the smallest integer such that $\ell \geq 4$ and $\ell \nmid n_1, n_2$. Let $t \geq 1$ and $n_1 \mid n_2$. Then 
\begin{equation}
    s'_{n_2t}\left(\Z/n_1\Z \times \Z/n_2\Z\right) \leq 2n_1 + (t+1)n_2 - \ell + 1.
\end{equation}
\end{thm}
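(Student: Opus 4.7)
The plan is to apply Theorem \ref{thmtwo} repeatedly, peeling off $t$ zero-sum subsequences of length $n_2$ one at a time and then taking their union.

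Before the iteration, I would first record a small but essential observation about $\ell$. Because $n_1 \mid n_2$, any integer dividing $n_1$ also divides $n_2$; contrapositively, $\ell \nmid n_2$ forces $\ell \nmid n_1$. Hence the defining condition ``$\ell \geq 4$ and $\ell \nmid n_1, n_2$'' in Theorem \ref{thmthree} is equivalent to ``$\ell \geq 4$ and $\ell \nmid n_2$'', which is precisely the $\ell$ appearing in Theorem \ref{thmtwo}. This alignment is what makes the iterative bound line up on the nose.

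Now let $S$ be a zero-sum sequence of length $L \geq 2n_1 + (t+1)n_2 - \ell + 1$. I would inductively construct disjoint zero-sum subsequences $T_1, \ldots, T_t \subseteq S$, each of length $n_2$. Having chosen $T_1, \ldots, T_{i-1}$, set $S_{i-1} := S \setminus (T_1 \cup \cdots \cup T_{i-1})$. Since $S$ and each $T_j$ are zero-sum, so is $S_{i-1}$, and its length satisfies
\begin{equation*}
|S_{i-1}| \;=\; L - (i-1)n_2 \;\geq\; 2n_1 + (t+2-i)n_2 - \ell + 1 \;\geq\; 2n_1 + 2n_2 - \ell + 1
\end{equation*}
whenever $i \leq t$. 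By Theorem \ref{thmtwo} this is at least $s'_{n_2}(\Z/n_1\Z \times \Z/n_2\Z)$, so a zero-sum subsequence $T_i \subseteq S_{i-1}$ of length $n_2$ exists.

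The union $T_1 \cup \cdots \cup T_t$ is then a zero-sum subsequence of $S$ of length $tn_2$, giving the desired bound. The only genuinely nontrivial input is Theorem \ref{thmtwo}, and the main obstacle (slight as it is) lies in the preliminary matching of the two definitions of $\ell$ — without the observation that $n_1 \mid n_2$ collapses the ``$\ell \nmid n_1$ and $\ell \nmid n_2$'' condition to ``$\ell \nmid n_2$'', the length left after $t-1$ extractions would not obviously exceed the threshold of the modified EGZ constant from Theorem \ref{thmtwo}. The base case $t = 1$ is just Theorem \ref{thmtwo} itself, so no separate treatment is needed.
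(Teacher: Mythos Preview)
Your proposal is correct and follows essentially the same route as the paper: the paper proves the result as Corollary~\ref{cor:n2t}, which inducts on $t$, using Proposition~\ref{n1n2prop} (the upper bound of Theorem~\ref{thmtwo}) as the base case and peeling off one zero-sum block of length $n_2$ at each step---exactly your iterative extraction. Your explicit reconciliation of the two definitions of $\ell$ (noting that $n_1 \mid n_2$ makes ``$\ell \nmid n_1$ and $\ell \nmid n_2$'' collapse to ``$\ell \nmid n_2$'') is a point the paper leaves implicit, so if anything your write-up is slightly more careful on that front.
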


 Lastly, we investigate the (\emph{unmodified}) EGZ constant of $\left(\Z/n\Z\right)^2$. In 1983, Kemnitz \cite{K} conjectured that $s_{n}\left((\Z/n\Z)\right)^2 = 4n-3$. In 1993, Alon and Dubiner \cite{AD} proved that $s_{n}\left((\Z/n\Z\right)^2 \leq 6n-5$ and showed $s_{p}\left((\Z/p\Z)^2\right) \leq 5p-2$ for sufficiently large primes $p$. In 2000, R\'oyai \cite{Ro} proved $s_p\left((\Z/p\Z)^2\right) \leq 4p-2$. Finally, in 2007, Reiher \cite{R}  resolved Kemnitz's Conjecture. 
\begin{thm}[\cite{R}, Theorem 3.2]\label{thm:reiher}
If $J$ is a sequence of length $4n-3$ in $\left(\Z/n\Z\right)^2$, then $(n \mid J) > 0$. 
\end{thm}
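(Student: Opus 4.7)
The plan is to reduce to the prime case and then apply the polynomial method. Writing $n = p \cdot m$, one proceeds by induction on the number of prime factors: the base case is $n = p$ prime, and the inductive step extracts length-$p$ zero-sums modulo $p$ from the given sequence. Reducing modulo $p$ and applying the prime case produces length-$p$ subsequences whose sums in $(\Z/n\Z)^2$ lie in $p \cdot (\Z/n\Z)^2 \cong (\Z/m\Z)^2$. One iterates until $4m-3$ such block-sums have been collected; a direct count shows the initial pool of $4n-3$ elements is exactly large enough to permit this. The inductive hypothesis applied to the $4m-3$ block-sums produces $m$ of them summing to zero in $(\Z/m\Z)^2$, and concatenating the underlying elements yields a zero-sum of length $pm = n$.

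The main argument is therefore the prime case $s_p((\Z/p\Z)^2) \leq 4p - 3$. Write the elements as $a_i = (x_i, y_i)$ for $1 \leq i \leq 4p - 3$ and consider the three polynomials over $\F_p$ in variables $z_1, \ldots, z_{4p-3}$,
\begin{equation*}
    f_1 = \sum_i x_i z_i^{p-1}, \qquad f_2 = \sum_i y_i z_i^{p-1}, \qquad f_3 = \sum_i z_i^{p-1}.
\end{equation*}
Their total degree is $3(p-1) < 4p - 3$, so by the Chevalley--Warning theorem the number of common zeros is divisible by $p$; since $z = 0$ is one such zero, there is a second, nontrivial $z \neq 0$. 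Fermat's little theorem forces its support $I = \{i : z_i \neq 0\}$ to satisfy $|I| \equiv 0 \pmod p$ (from $f_3$) and $\sum_{i \in I} a_i = 0$ in $(\Z/p\Z)^2$ (from $f_1, f_2$). Since $|I| \leq 4p - 3 < 4p$, this yields a zero-sum subsequence of length $p$, $2p$, or $3p$.

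The remaining difficulty, which I expect to be the chief obstacle, is upgrading this to length exactly $p$. Taking complements inside $J$ does not work because $\sum_{i=1}^{4p-3} a_i$ need not vanish, so a length-$3p$ zero-sum does not immediately convert to a length-$p$ one. Following Reiher, I would rerun Chevalley--Warning on weighted variants of $f_1, f_2, f_3$, introducing auxiliary linear forms as coefficients so as to produce several nontrivial zero-sum subsequences simultaneously. A careful case analysis on their pairwise intersection sizes modulo $p$, together with the restriction that the available lengths lie in $\{p, 2p, 3p\}$, ultimately forces one of the subsequences (or a symmetric difference of two of them) to have length exactly $p$. Purely degree-counting approaches, such as Alon--Dubiner's, stall at $5p-2$ and R\'onyai's refinement at $4p-2$; closing the gap to $4p-3$ is precisely what this final structural step accomplishes.
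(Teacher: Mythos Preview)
The paper does not prove this statement. Theorem~\ref{thm:reiher} is quoted from Reiher~\cite{R} and used throughout as a black box; there is no proof in the present paper to compare your proposal against.

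On the proposal itself: the multiplicative reduction from $n=pm$ to the prime case and the initial Chevalley--Warning application producing a zero-sum of length in $\{p,2p,3p\}$ are standard and correct. The substance, as you acknowledge, is the final step, and here your sketch does not match Reiher's actual argument. Reiher does not introduce ``auxiliary linear forms as coefficients'' or analyze ``pairwise intersection sizes'' of several simultaneously produced zero-sums. Rather, he applies the basic Chevalley--Warning count to subsequences $X\subseteq J$ of several fixed lengths (notably $3p-3$, $3p-2$, $3p-1$) to obtain congruences of the shape $1-(p\mid X)+(2p\mid X)\equiv 0\pmod p$ (this is exactly Lemma~\ref{lem:reiherlemma} in the present paper), then sums such congruences over all subsequences of a given size and manipulates the resulting identities among the quantities $(kp\mid J)$ until the hypothesis $(p\mid J)=0$ yields a contradiction. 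Your description points toward a different style of argument and, as written, is too vague to serve as a proof plan for the decisive step. Since the surrounding paper treats Reiher's theorem purely as a citation, the appropriate move here is simply to invoke~\cite{R} rather than to reprove it.
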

We consider the EGZ constant when $\mathcal{L} = \{nt\}$, $t \geq 1$, i.e., the minimal $\ell$ such that any sequence of length $\ell$ contains a zero-sum subsequence of length $2n$ or $3n$, etc. 

\begin{thm}\label{thm:egzprimes}
If $t\geq2$ and $n = p$ is prime, then we have 
\begin{equation*}
    s_{pt}\left((\Z/p\Z)^2\right) = (t+2)p-2.
\end{equation*}
\end{thm}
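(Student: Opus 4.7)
The plan is to prove the upper and lower bounds separately. The lower bound $s_{pt}((\Z/p\Z)^2) \geq (t+2)p - 2$ comes from the standard extremal example of length $(t+2)p - 3$: take $p-1$ copies of $(1,0)$, $p-1$ copies of $(0,1)$, and $pt-1$ copies of $(0,0)$. Any zero-sum subsequence of length $pt$ must contain $a$ copies of $(1,0)$ and $b$ copies of $(0,1)$ with $p \mid a$, $p \mid b$, and since $0 \leq a,b \leq p-1$ this forces $a = b = 0$, hence $pt$ copies of $(0,0)$, one more than available.

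For the upper bound I induct on $t$. In the inductive step $t \geq 3$, given $|S| = (t+2)p - 2 \geq 4p - 3$, Reiher's Theorem \ref{thm:reiher} yields a zero-sum $p$-subsequence $T_1$; the residual $S \setminus T_1$ has length $(t+1)p - 2$, matching $s_{(t-1)p}((\Z/p\Z)^2)$ by the induction hypothesis, so it contains a disjoint zero-sum $(t-1)p$-subsequence $T_2$, and $T_1 \cup T_2$ is a zero-sum subsequence of length $pt$.

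The main obstacle is the base case $t = 2$, i.e.\ showing $s_{2p}((\Z/p\Z)^2) \leq 4p - 2$, which I handle by combining Reiher with Chevalley-Warning. Given $|S| = 4p - 2$, Reiher extracts a zero-sum $p$-subsequence $T_1$; set $S_1 = S \setminus T_1$, of length $3p - 2$. If $S_1$ contains another zero-sum $p$-subsequence $T_2$, then $T_1 \cup T_2$ is already a zero-sum of length $2p$. Otherwise, writing $s_i = (a_i, b_i) \in \F_p^2$ for the elements of $S_1$, I apply Chevalley-Warning to the three polynomials $f_1 = \sum_i x_i^{p-1}$, $f_2 = \sum_i a_i x_i^{p-1}$, $f_3 = \sum_i b_i x_i^{p-1}$ over $\F_p[x_1, \ldots, x_{3p-2}]$, whose total degree $3(p-1)$ is strictly less than $|S_1| = 3p - 2$. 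Grouping the common zeros by support $T$ (forced to satisfy $|T| \equiv 0 \pmod p$ and $\sum_{i \in T} s_i = 0$, so $|T| \in \{0, p, 2p\}$) and weighting by the $(p-1)^{|T|}$ nonzero coordinate assignments, the count reduces modulo $p$ to $1 + Z_p(S_1)(-1)^p + Z_{2p}(S_1) \equiv 0 \pmod p$, where $Z_k$ counts zero-sum $k$-subsequences of $S_1$. With $Z_p(S_1) = 0$ by assumption, this forces $Z_{2p}(S_1) \equiv -1 \pmod p$, hence $Z_{2p}(S_1) \geq 1$, producing a zero-sum $2p$-subsequence inside $S_1 \subset S$.
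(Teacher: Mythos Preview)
Your proof is correct and follows essentially the same route as the paper. The lower-bound construction and the induction on $t$ match the paper's Proposition~\ref{prop:egzlowerbound} and Corollary~\ref{cor:egzprime} verbatim; for the base case $t=2$ the paper cites Reiher's congruence (Lemma~\ref{lem:reiherlemma}) as a black box, while you rederive it in-line via Chevalley--Warning, and your case split ($Z_p(S_1)=0\Rightarrow Z_{2p}(S_1)>0$) is just the contrapositive of the paper's ($(2p\mid J)=0\Rightarrow (p\mid J)>0$).
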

\begin{cor}\label{cor:egznonprimeintro}
Let $t \geq 2$ and write $n = pm$. We have
\begin{equation*}
    (t+2)n - 2 \leq s_{nt}\left((\Z/n\Z)^2\right) \leq (t+2)n + m - 3.
\end{equation*}
Note if $n = p \neq 3$ is prime, then $m = 1$ and we recover Theorem \ref{thm:egzprimes}.
\end{cor}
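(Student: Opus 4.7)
My plan is to prove the two bounds separately. The lower bound is by explicit construction, while the upper bound is a ``reduce--extract--combine'' argument gluing Reiher's theorem (Theorem \ref{thm:reiher}) to the prime case (Theorem \ref{thm:egzprimes}).

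For the lower bound $(t+2)n - 2$, I would exhibit a sequence of length $(t+2)n - 3$ in $(\Z/n\Z)^2$ with no zero-sum subsequence of length $nt$. Taking $n-1$ copies of $(1,0)$, $n-1$ copies of $(0,1)$, and $nt-1$ copies of $(0,0)$ gives the correct total length, and any zero-sum subsequence must use the two nonzero generators in multiples of $n$; since only $n-1$ of each is available, both counts vanish, bounding the length of a zero-sum subsequence by $nt - 1$.

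For the upper bound $(t+2)n + m - 3$, writing $n = pm$ with $p$ prime, let $S$ be a sequence in $(\Z/n\Z)^2$ of length $L = (t+2)n + m - 3$. I would reduce $S$ modulo $m$ (not $p$) into $(\Z/m\Z)^2$ and iteratively peel off disjoint zero-sum subsequences of length $m$ via Theorem \ref{thm:reiher}; each peel requires $\geq 4m-3$ leftover elements, so $R$ subsequences can be extracted whenever $L \geq (R+3)m - 3$. Plugging in the value of $L$ shows this balances exactly at $R = (t+2)p - 2$. Lifting each length-$m$ subsequence back to $S$, its sum lands in $m(\Z/n\Z)^2 \cong (\Z/p\Z)^2$ (the identification $mk \leftrightarrow k \bmod p$), so it is represented by some $y_i \in (\Z/p\Z)^2$, giving a length-$R$ sequence $(y_1, \dots, y_R)$ in $(\Z/p\Z)^2$.

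The key coincidence is that $R = (t+2)p - 2 = s_{tp}((\Z/p\Z)^2)$ by Theorem \ref{thm:egzprimes}, so $(y_i)$ contains a zero-sum subsequence of length $tp$ in $(\Z/p\Z)^2$. The union of the corresponding $tp$ length-$m$ subsequences is then a zero-sum subsequence of $S$ of length $tpm = tn$, finishing the proof. The main obstacle is choosing the right reduction: modding out by $m$ (rather than by $p$) makes Reiher's peel count $R$ line up perfectly with $s_{tp}((\Z/p\Z)^2)$, with no slack. The reverse order --- reducing mod $p$ first and then trying to combine in $(\Z/m\Z)^2$ --- extracts only $R' = (t+2)m + \lfloor m/p \rfloor - 3$ zero-sums, which is too few to feed into Theorem \ref{thm:egzprimes} applied to $(\Z/m\Z)^2$ whenever $m < p$, so it would not yield the stated bound uniformly.
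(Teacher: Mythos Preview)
Your proof is correct. The lower bound construction is identical to the paper's (Proposition~\ref{prop:egzlowerbound}). For the upper bound, the paper proceeds in two stages: it first proves the case $t=2$ (Proposition~\ref{prop:egznonprime}) by exactly your $m$-level block decomposition---peeling off $4p-2$ blocks of size $m$ with sums $\equiv 0 \pmod m$ and then invoking the prime case for $t=2$---and then handles general $t$ by a separate induction (Corollary~\ref{cor:egznonprime}) that peels off zero-sum subsequences of length $n$ at the $n$-level via Theorem~\ref{thm:reiher}. You instead run the $m$-level decomposition uniformly for all $t\ge 2$, extracting $(t+2)p-2$ blocks and feeding their sums directly into the full Theorem~\ref{thm:egzprimes}. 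Your route is cleaner and avoids the second induction; the paper's route has the minor structural benefit of isolating the $t=2$ case as a standalone proposition. Your remark about the reverse reduction order is a nice sanity check but not needed for the argument.
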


\section{Proofs of Theorems \ref{thmone} and  \ref{thm:modegzprimes}}
In this section we give the proof of Theorem \ref{thmone}. As in \cite{BW}, if $J$ is a sequence of elements of $\left(\Z/n\Z\right)^2$, we use $(k \mid J)$ to denote the number of zero-sum subsequences of $J$ of size $k$. The proof closely follows the ideas presented in \cite{BW}. 

\begin{prop}\label{prop:gcdlowerbound}
Let $3 \leq k \leq n-1$ be the least integer such that $\gcd(n,k) = 1$. Then there exists a zero-sum sequence in $(\Z/n\Z)^2$ of length $(t+2)n - k$ which contains no zero-sum subsequences of length $nt$. 
\end{prop}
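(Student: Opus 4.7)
My plan is to produce an explicit zero-sum sequence $J$ in $(\Z/n\Z)^2$ of length $(t+2)n - k$ that contains no zero-sum subsequence of length $nt$, following the strategy used by Berger and Wang for Theorem \ref{thm:BW2}. A useful first step is the observation that, since $J$ is itself zero-sum, a subsequence of $J$ of length $nt$ is zero-sum if and only if its complement, which has length $(t+2)n - k - nt = 2n - k$, is zero-sum. So one may equivalently construct a zero-sum $J$ of length $(t+2)n - k$ with no zero-sum subsequence of length $2n - k$, and this second formulation is more tractable because $2n - k < D((\Z/n\Z)^2) = 2n - 1$ leaves room for zero-sum-free subsequences of the target length.

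For the construction I would assemble $J$ from three blocks: (i) a ``balanced'' block consisting of $n-1$ copies each of $(1,0)$, $(0,1)$, and $(-1,-1)$, whose sum is $(0,0)$ and whose internal zero-sum subsequences are constrained (by size bounds) to have all three counts equal; (ii) a small ``corrector'' block of non-zero elements of the form $(a,a)$, whose multiplicity is tied to $k$ via the coprimality $\gcd(n,k)=1$ and which compensates the sum while forcing residue-class obstructions to length-$(2n-k)$ zero-sums; and (iii) a padding block of copies of $(0,0)$, with cardinality strictly less than $2n-k$ so that no length-$(2n-k)$ subsequence of padding alone exists. A direct count verifies that the three blocks fit together to give total length exactly $(t+2)n - k$ and total sum $(0,0)$.

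The main obstacle is to verify that no subsequence of $J$ of length $2n - k$ is zero-sum. I would argue by case analysis on the counts in each block. The two coordinate sum-zero conditions, reduced modulo $n$, together with the bounds $a,b,c \leq n-1$ on the balanced-block counts, force the three balanced counts to be equal; the length constraint $2n - k$ then becomes a single linear equation in the corrector count $e$ and the padding count. The coprimality $\gcd(n,k) = 1$ is precisely what makes the resulting equation have no solution within the required range, ruling out all candidate zero-sum subsequences of length $2n - k$.

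For the prime case $n = p$ one has $k = 3$, the corrector block becomes a two-element block, and the case analysis collapses to a short direct check, recovering Theorem \ref{thm:modegzprimes}. The general-$n$ case uses the same skeleton but requires more careful bookkeeping of the residue classes modulo $n$ involved in the corrector, leveraging the stronger hypothesis $\gcd(n, k) = 1$ in place of the weaker $k \nmid n$ that BW use for their $t = 1$ analysis.
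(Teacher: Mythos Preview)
Your complement observation---that a zero-sum subsequence of length $nt$ in a zero-sum $J$ of length $(t+2)n-k$ exists if and only if one of length $2n-k$ does---is correct and is a pleasant reformulation that the paper does not use. The difficulty is that the construction you sketch cannot realize the required length once $t\ge 3$. Your balanced block has $3(n-1)$ elements, your corrector is declared ``small,'' and your padding of zeros is required to have fewer than $2n-k$ elements; summing these gives strictly less than $5n-k+O(1)$, which falls short of $(t+2)n-k$ as soon as $t\ge 3$. If instead you let the padding grow with $t$, then for $t\ge 3$ it will contain at least $2n-k$ copies of $(0,0)$, and those alone form a zero-sum subsequence of the forbidden length $2n-k$. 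So the three design constraints you list are mutually incompatible for general $t$, and even for $t=2$ the corrector must be specified carefully: with no corrector, taking $m$ copies of each balanced element and $2n-k-3m$ zeros gives a zero-sum subsequence of length $2n-k$ whenever $(n-3)/3\le m\le (2n-k)/3$, and this range always contains an integer.

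The paper avoids this by padding not with zeros but with $tn-1$ copies of a single element, initially $(0,0)$, and arguing that the original (non-zero-sum) sequence has \emph{no nonempty zero-sum subsequence at all among its nonzero part} (the nonzero elements contribute at most $n-1$ to each coordinate). It then translates the entire sequence by some $(r,s)$; because translation by $(r,s)$ shifts the sum of any length-$nt$ subsequence by $nt\cdot(r,s)=(0,0)$, the ``no zero-sum of length $nt$'' property is preserved. The condition $\gcd(n,k)=1$ enters in the opposite direction from what you suggest: it is used to \emph{solve} the congruences $-kr\equiv 1$ and $-ks\equiv 1 \pmod n$ that make the translated sequence zero-sum, not to show some length equation has no solution. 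If you want to salvage your complement approach, you would need the bulk of the sequence to consist of many copies of a \emph{nonzero} element (so that long runs are not automatically zero-sum), which pushes you back toward the paper's shift trick.
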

\begin{proof}
Consider a sequence of the form 
\begin{align*}
    (0,0) \ \ a &= tn-1 \\
    (1,0) \ \ b &= n - (k - 2) \\
    (0,1) \ \ c &= n - (k - 2) \\
    (1,1) \ \ d &= k - 3,
\end{align*}
where $a$ denotes the number of $(0,0)$'s, etc. It suffices to show that there is no zero-sum subsequence of any length among the nonzero elements, otherwise we could add copies of $(0,0)$ until we have a zero-sum subsequence of length $nt$. Indeed the sum of the nonzero elements is $(n-1, n-1)$ and there are at most $n-1$ nonzero elements with value $1$ being summed in each coordinate, so there is no zero-sum subsequence modulo $n$. Note also that since $k \geq 3$, $b,c \leq n-1$ and $d \leq n-4$, we cannot form a zero-sum subsequence using copies of only one basis element. We claim there exists $(r,s) \in (\Z/n\Z)^2$ such that adding $(r,s)$ to each term of the above sequence will result in a zero-sum sequence. Note that adding $(r,s)$ to each term does not change the fact that there is no zero-sum subsequence of length $nt$. Indeed, we only need to satisfy the divisibilty relations 
\begin{align*}
    (tn-1+n-k+2)r + (n - k + 2 + k - 3)(r+1) &\equiv 0 \pmod n \\
    (tn - 1 + n - k + 2)s + (n - k + 2 + k - 3)(s+1) &\equiv 0 \pmod n,
\end{align*}
which reduce to 
\begin{align*}
    k(-r) &\equiv 1 \pmod n \\
    k(-s) &\equiv 1 \pmod n.
\end{align*}
We can solve for $(r,s)$ since $\gcd(n,k) = 1$. 
\end{proof}

\begin{prop}\label{prop:p=3lowerbound}
There exists a zero-sum sequence in $(\Z/3\Z)^2$ of length $(3t+2)$ which contains no zero-sum subsequence of length $3t$. 
\end{prop}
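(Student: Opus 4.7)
The plan is to reduce the problem of forbidding a zero-sum subsequence of length $3t$ to the problem of forbidding a zero-sum subsequence of length $2$. Since the sequence we construct will have length $3t+2$ and be zero-sum, a length-$3t$ subsequence is zero-sum if and only if its complement (of length $2$) is zero-sum. So I would aim to construct a zero-sum sequence of length $3t+2$ in $(\Z/3\Z)^2$ in which no two entries sum to $(0,0)$.

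For the second step, I would exploit the following structural feature of $(\Z/3\Z)^2$: since the characteristic is $3$, no nonzero element is its own negative, and for any nonzero $g$ we have $g+g = -g \neq (0,0)$. Hence two entries sum to zero precisely when they are true antipodes $g$ and $-g$, or when both equal $(0,0)$. Therefore, as long as each nonzero element we use is drawn from a distinct antipodal pair (and $(0,0)$ is used at most once), the sequence cannot contain any length-$2$ zero-sum subsequence, regardless of how high the multiplicities are.

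The explicit construction I would take is one copy of $(1,0)$, $3t-1$ copies of $(0,1)$, and two copies of $(1,2)$. These three nonzero elements lie in three different antipodal pairs, so by the previous paragraph no two entries of the sequence sum to zero. A direct check gives length $1+(3t-1)+2 = 3t+2$ and total sum $(1+2,\,(3t-1)+4) = (3,\,3t+3) \equiv (0,0) \pmod 3$, so the sequence is indeed zero-sum, and no length-$3t$ zero-sum subsequence exists.

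No step here is really hard; the only potential obstacle is finding multiplicities that simultaneously give the correct length and zero total sum, but once the antipodal-pair principle is in hand this reduces to an elementary mod-$3$ calculation. One can view this construction as the analogue of the proof of Proposition \ref{prop:gcdlowerbound} in the degenerate case $n=3$, where the hypothesis $k \leq n-1$ used there fails (the smallest $k \geq 3$ coprime to $3$ is $k=4 > n-1 = 2$), so a separate but simpler argument is required.
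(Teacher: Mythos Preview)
Your proof is correct, but the route differs from the paper's. The paper first writes down a (non-zero-sum) sequence with $3t-1$ copies of $(0,0)$ together with one each of $(1,0)$, $(0,1)$, $(1,1)$, observes directly that no subset of the three nonzero elements is zero-sum (hence no length-$3t$ zero-sum subsequence), and then translates every term by a fixed $(r,s)$ to force the total sum to vanish; translation preserves the property since $3t\cdot(r,s)\equiv 0$. Your argument instead builds a zero-sum sequence from the outset and invokes the complement trick to reduce ``no zero-sum subsequence of length $3t$'' to ``no zero-sum pair,'' which you then handle via the antipodal-pair structure of $(\Z/3\Z)^2$. Your approach is slightly more conceptual---it explains \emph{why} the construction works in terms of the group structure rather than by an ad hoc check---while the paper's translation device has the virtue of matching the template used in Proposition~\ref{prop:gcdlowerbound} for general $n$, making the $p=3$ case feel like a direct specialization. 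Either way the verification is a one-line computation.
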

\begin{proof}
Consider a sequence of the form 
\begin{align*}
    (0,0) \ & \ 3t - 1 \\
    (1,0) \ & \ 1 \\
    (0,1) \ & \ 1 \\
    (1,1) \ & \ 1.
\end{align*}
There is clearly no zero-sum subsequence of length $3t$. We claim there exists $(r,s) \in (\Z/n\Z)^2$ such that adding $(r,s)$ to each term of the above sequence will result in a zero-sum sequence. It is easy to check that $(2,2)$ works. 
\end{proof}
 Proposition \ref{prop:p=3lowerbound} provides the lower bound for the $p = 3$ case of  Theorem \ref{thm:modegzprimes}. Proposition \ref{prop:gcdlowerbound} provides the lower bound for both Theorem \ref{thm:modegzprimes} and Theorem \ref{thmone}, by noting that when $n=p$ is prime, $k = 3$.
 
\begin{prop}\label{prop:egz3upper}
If $J$ is a zero-sum sequence in $(\Z/3\Z)^2$ of length $3(t+1)$ then $(3t \mid J ) > 0$. 
\end{prop}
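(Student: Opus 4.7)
The plan is to exploit complementation. Since $J$ is zero-sum, any zero-sum subsequence $S \subseteq J$ has the property that $J \setminus S$ is also zero-sum. Thus, to produce a length-$3t$ zero-sum subsequence of $J$, it suffices to find a length-$3$ zero-sum subsequence $S$, since the complement $J \setminus S$ will automatically have length $3(t+1) - 3 = 3t$ and be zero-sum.

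To extract such an $S$, I would invoke Reiher's theorem (Theorem \ref{thm:reiher}) with $n = 3$: every sequence in $(\Z/3\Z)^2$ of length $4 \cdot 3 - 3 = 9$ contains a zero-sum subsequence of length $3$. In the context in which this proposition is used, namely the $p = 3$ case of Theorem \ref{thm:modegzprimes}, we have $t \geq 2$, hence $|J| = 3(t+1) \geq 9$. So Reiher's theorem applies directly to $J$ and yields the desired length-$3$ zero-sum subsequence. Taking its complement gives $(3t \mid J) > 0$.

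There is no real obstacle here; the entire content of the proposition is packaged inside Reiher's theorem, and the proof reduces to a one-line complementation. The only subtlety is that the hypothesis $t \geq 2$ is required: at $t = 1$ the claim would say $s'_3((\Z/3\Z)^2) \leq 6$, contradicting Theorem \ref{thm:BW2}, which gives $s'_3((\Z/3\Z)^2) = 9$.
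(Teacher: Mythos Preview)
Your proof is correct and uses the same two ingredients as the paper—Reiher's theorem for $n=3$ and complementation in a zero-sum sequence—but you package them more efficiently. The paper's proof sets up an induction on $t$: the base case $t=2$ is exactly your complementation argument, and the inductive step peels off a length-$3$ zero-sum piece via Reiher, applies the induction hypothesis to the remaining $3(t+1)$ elements to find a length-$3t$ zero-sum subsequence, and then reattaches the removed piece. You observe that this induction is unnecessary: since $3(t+1)\geq 9$ for every $t\geq 2$, Reiher applies directly, and the complement of the resulting length-$3$ zero-sum subsequence is already the desired length-$3t$ zero-sum subsequence. Your remark on why $t=1$ must be excluded is also on point.
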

\begin{proof}
We induct on $t$. Note that when $t=2$, we have $3(t+1) = 9 = 4(3) - 3$.
By Theorem \ref{thm:reiher}, we can remove a zero-sum subsequence of length $3$, leaving us with $6$ elements. Since the original sequence of length $9$ was zero-sum, the remaining $6$ elements are zero-sum, so we have found our zero-sum subsequence of length $3t$. Now suppose the statement is true for all positive integers at most $t \geq 2$. Consider a zero-sum sequence of length $3((t+1) + 1)$. We have
\begin{equation*}
    3((t+1) + 1) \geq 4(3) - 3,
\end{equation*}
since $t \geq 1$. So we remove a zero-sum subsequence of length $3$. This leaves a zero-sum sequence with $3(t+1)$ elements. By the induction hypothesis, this has a zero-sum subsequence of length $3t$. Combining this subsequence with the zero-sum subsequence of length $3$ we removed yields a zero-sum subsequence of length $3(t+1)$, as desired. 
\end{proof}

Note that in general the modified EGZ constant is bounded above by the EGZ constant. If any sequence of some length has a zero-sum subsequence, then surely any zero-sum sequence of that same length will have a zero-sum subsequence. Theorem \ref{thm:egzprimes} and Corollary \ref{cor:egznonprimeintro} provide the upper bounds to finish the proofs of Theorems \ref{thm:modegzprimes} and \ref{thmone}. Note in the case $p=3$, the value of the upper bound provided by Theorem \ref{thm:egzprimes} is exactly one more than the length in Proposition \ref{prop:egz3upper}. 

Now we prove an analogue of a key lemma from \cite{BW}.
\begin{lem}[\cite{BW}, Lemma 3.4]\label{lem:BWlemma3.4}
If $J$ is a zero-sum sequence of length $3n$ in $(\Z/n\Z)^2$, then $(n \mid J)$. 
\end{lem}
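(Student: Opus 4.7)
The plan is to invoke Reiher's Theorem \ref{thm:reiher}, which guarantees that any sequence of length $4n - 3$ in $(\Z/n\Z)^2$ contains a zero-sum subsequence of length $n$. Since $J$ has length $3n$, short of $4n - 3$ by $n - 3$, I would augment $J$ by appending $n - 3$ copies of the zero element $(0,0)$ to form $J^\ast = J \cup \{(0,0)^{n-3}\}$, a sequence of length $4n - 3$ (which remains zero-sum, although this is not needed to apply Reiher). Reiher's theorem then produces a zero-sum subsequence $S \subseteq J^\ast$ of length $n$.

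Decomposing $S = T \sqcup Z$ where $T \subseteq J$ and $Z$ consists of the appended zeros appearing in $S$, and writing $k = |Z|$ so that $|T| = n - k$ for some $k \in \{0, 1, \dots, n-3\}$, the identity $\sum Z = 0$ forces $\sum T = 0$. Thus $T$ is a zero-sum subsequence of $J$ of length $n - k$. The favorable case is $k = 0$: then $T$ itself is a zero-sum subsequence of $J$ of length $n$, and we are done.

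The remaining case $k \geq 1$ is the harder one, since we then have only a zero-sum subsequence of $J$ of length in $[3, n-1]$. The natural continuation is to use the zero-sum hypothesis: both $J$ and $T$ being zero-sum implies $J \setminus T$ is zero-sum, of length $2n + k$. Iterating the augmentation-and-Reiher strategy on $J \setminus T$, or combining $T$ with further short zero-sum subsequences of $J \setminus T$, one would aim to build up a zero-sum subsequence whose total length is exactly $n$.

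The main obstacle is making the iteration actually succeed. After removing $T$, augmenting $J \setminus T$ back up to length $4n - 3$ requires appending $2n - 3 - k \geq n$ copies of $(0,0)$, so Reiher's theorem is free to return a subsequence consisting entirely of appended zeros, yielding no information about $J \setminus T$. To overcome this, I would either refine the augmentation by appending a carefully chosen non-zero element (for instance, one repeated sufficiently often in $J$ so that it can absorb any leftover sum) to force Reiher's output to intersect $J \setminus T$, or in the prime case $n = p$ replace the iteration with a direct polynomial/Chevalley--Warning argument on variables $x_1, \dots, x_{3p} \in \F_p$ and polynomials $f_j(x) = \sum_i x_i^{p-1} c_{j,i}$, exploiting the complementarity $T \leftrightarrow J \setminus T$ (which pairs zero-sum subsequences of sizes $p$ and $2p$) together with the zero-sum hypothesis on the full sequence to deduce that the count of zero-sum subsequences of length $p$ is positive.
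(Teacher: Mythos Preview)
The paper does not prove this lemma; it is quoted from \cite{BW}, so there is no in-paper argument to compare against. Judged on its own, your proposal is incomplete, and you correctly locate the gap yourself: once Reiher's theorem, applied to the augmented sequence $J^\ast$, returns a zero-sum $n$-subsequence $S$ that uses $k\ge 1$ of the appended zeros, you are left with a zero-sum $T\subseteq J$ of length $n-k\in\{3,\dots,n-1\}$, and nothing in your outline promotes $T$ to length exactly $n$. The iteration fails for the reason you give (re-augmenting $J\setminus T$ to length $4n-3$ requires appending at least $n$ zeros, so Reiher may return a block consisting only of appended zeros), and your alternative of appending a non-zero element $a\in J$ only helps when $a$ already has high multiplicity in $J$, which you cannot guarantee.

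The last idea you sketch is the one that actually closes the prime case and is essentially how \cite{BW} argues: suppose $(p\mid J)=0$; since $J$ is zero-sum of length $3p$, complementarity forces $(2p\mid J)=0$ as well; deleting any single element yields $J'$ of length $3p-1$ with $(p\mid J')=(2p\mid J')=0$, whereupon Lemma~\ref{lem:reiherlemma} gives $1\equiv 0\pmod p$, a contradiction. What remains entirely unaddressed in your proposal is the composite case $n=pm$: here one needs a genuine inductive step, passing through $(\Z/p\Z)^2$ for the block structure and invoking the inductive hypothesis in $(\Z/m\Z)^2$, and the details are not routine (for instance, a zero-sum sequence of length $2p$ in $(\Z/p\Z)^2$ need not split into two zero-sum $p$-blocks, so naive block-partitioning can fail). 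As written, your plan neither sets up this reduction nor executes it.
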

\begin{prop}\label{prop:BWlemma3.4analogue}
If $J$ is a zero-sum sequence of length $3n$ in $(\Z/n\Z)^2$ then $(2n \mid J) > 0$. 
\end{prop}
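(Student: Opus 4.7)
The plan is to derive this essentially for free from Lemma \ref{lem:BWlemma3.4}, exploiting the fact that $J$ itself is assumed to be zero-sum.

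First I would apply Lemma \ref{lem:BWlemma3.4} directly to $J$: since $J$ is a zero-sum sequence of length $3n$ in $(\Z/n\Z)^2$, we are guaranteed $(n \mid J) > 0$, so there exists a zero-sum subsequence $J_1 \subseteq J$ of length $n$.

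Next, consider the complementary sub-multiset $J' := J \setminus J_1$. Its length is $3n - n = 2n$. The sum of its elements is
\begin{equation*}
    \sum_{x \in J'} x \;=\; \sum_{x \in J} x \;-\; \sum_{x \in J_1} x \;=\; (0,0) - (0,0) \;=\; (0,0),
\end{equation*}
using that both $J$ and $J_1$ are zero-sum. Hence $J'$ is itself a zero-sum subsequence of $J$ of length $2n$, which shows $(2n \mid J) > 0$.

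There is no real obstacle here: the entire argument is the observation that in a zero-sum host sequence, the complement of any zero-sum subsequence is again zero-sum, so Lemma \ref{lem:BWlemma3.4} (the length-$n$ statement) immediately implies the length-$2n$ statement. The same complementation trick will likely be reused throughout the inductive arguments for larger $t$ in the modified EGZ bounds.
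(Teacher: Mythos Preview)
Your proof is correct and follows essentially the same approach as the paper: apply Lemma~\ref{lem:BWlemma3.4} to obtain a zero-sum subsequence of length $n$, then use that $J$ is zero-sum so the complement of length $2n$ is also zero-sum. The paper unnecessarily splits into the cases $(n\mid J)=1$ and $(n\mid J)\ge 2$ (combining two length-$n$ zero-sum subsequences in the latter case, which tacitly assumes disjointness), whereas your complementation argument handles all cases at once and is cleaner.
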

\begin{proof}
By Lemma \ref{lem:BWlemma3.4}, $(n \mid J) > 0$. If $(n \mid J) = 1$, then the complement sequence of length $2n$ is zero-sum since $J$ is zero-sum. Otherwise $(n \mid J) \geq 2$, in which case we can pick $2$ of the zero-sum subsequences of length $n$ and combine them to obtain a zero-sum subsequence of length $2n$. 
\end{proof}
Now we generalize Proposition \ref{prop:BWlemma3.4analogue} for all $t \geq 2$. 
\begin{cor}\label{cor:modegzallt}
If $J$ is a zero-sum sequence of length $(t+1)n$ in $(\Z/n\Z)^2$ and $t \geq 2$, then $(tn \mid J) > 0$. 
\end{cor}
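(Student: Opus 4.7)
The plan is to reduce to the base case $t=2$ (which is exactly Proposition \ref{prop:BWlemma3.4analogue}) and handle every larger $t$ by a single application of Reiher's theorem. The key observation is that as soon as $t \geq 3$, the length $(t+1)n$ of the zero-sum sequence $J$ already exceeds the Kemnitz bound $4n-3$, so we may simply chip off a zero-sum block of length $n$ and take its complement.

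More precisely, I would proceed by induction on $t \geq 2$. For the base case $t = 2$, the statement is just Proposition \ref{prop:BWlemma3.4analogue}. For the inductive step, suppose the result holds for some $t \geq 2$, and let $J$ be a zero-sum sequence in $(\Z/n\Z)^2$ of length $(t+2)n$. Since $t+2 \geq 4$, we have
\[
(t+2)n \;\geq\; 4n \;>\; 4n - 3,
\]
so Theorem \ref{thm:reiher} applies to $J$ and produces a zero-sum subsequence $J_0 \subseteq J$ of length $n$. Write $J' = J \setminus J_0$. Then $J'$ has length $(t+1)n$, and because $J$ and $J_0$ are both zero-sum, so is $J'$. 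By the induction hypothesis, $J'$ contains a zero-sum subsequence $J''$ of length $tn$. Then $J_0 \cup J''$ is a zero-sum subsequence of $J$ of length $n + tn = (t+1)n$, completing the induction.

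One could equally well avoid the induction and argue directly: for $t \geq 3$, apply Theorem \ref{thm:reiher} once to extract a length-$n$ zero-sum piece from $J$ (valid since $(t+1)n \geq 4n > 4n-3$), and the complement is automatically a zero-sum subsequence of length $tn$ because sums of disjoint zero-sum pieces cancel. The only case requiring real content is $t=2$, where the length $3n$ is below the Kemnitz threshold for $n \geq 4$, and this case is precisely what Proposition \ref{prop:BWlemma3.4analogue} was set up to supply. There is essentially no obstacle: the work has been done by \ref{thm:reiher} and \ref{prop:BWlemma3.4analogue}, and the corollary is a short assembly of these two ingredients.
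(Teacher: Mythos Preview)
Your inductive argument is correct and essentially matches the paper's proof: the paper handles $t\ge 3$ by repeatedly applying Theorem~\ref{thm:reiher} to peel off $t-2$ zero-sum blocks of length $n$, then invokes Proposition~\ref{prop:BWlemma3.4analogue} on the remaining $3n$ elements and recombines --- which is exactly your induction unwound. Your alternative ``complement'' argument for $t\ge 3$ is in fact a slight streamlining of both: a single application of Reiher's theorem already yields a zero-sum complement of length $tn$, so there is no need to reduce all the way to length $3n$ or to invoke the inductive hypothesis at all in that range.
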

\begin{proof}
The $t=2$ case is Proposition \ref{prop:BWlemma3.4analogue}. Assume $t \geq 3$. Then $(t+1)n > 4n-3$. By Theorem \ref{thm:reiher}, we can remove zero-sum subsequences of length $n$ until there are exactly $3n$ remaining. This gives us $t-2$ zero-sum subsequences of length $n$. Since $J$ is zero-sum, the $3n$ remaining elements are zero-sum. Hence by Proposition \ref{prop:BWlemma3.4analogue}, there is a zero-sum subsequence of length $2n$. Combining this with the $t-2$ zero-sum subsequences of length $n$ gives a zero-sum subsequence of length $nt$. 
\end{proof}
\section{Proof of Theorems \ref{thmtwo} and \ref{thmthree}}

\begin{prop}\label{n1n2prop}
Let $\ell$ be the smallest positive integer greater than or equal to $4$  such that $\ell \nmid n_1$. If $J$ is a zero-sum sequence in $G = \left(\Z/n_1\Z \times \Z/n_2 \Z\right)$ with $n_1 \mid n_2$ and $J$ has length at least  $2n_1 + 2n_2 - \ell + 1$, then $(n_2 \mid J) > 0$.
\end{prop}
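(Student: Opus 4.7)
The plan is to iteratively apply the modified EGZ bound from Theorem \ref{thm:BW2} on $(\Z/n_1\Z)^2$ via the projection $\pi\colon G \to (\Z/n_1\Z)^2$ defined by $(x,y) \mapsto (x, y \bmod n_1)$, to extract enough zero-sum subsequences in the projection, and then combine them via the classical Erd\H{o}s-Ginzburg-Ziv theorem on $\Z/m\Z$ (where $m = n_2/n_1$) to produce a zero-sum subsequence of length $n_2$ back in $G$.

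First, $\pi$ is a group homomorphism, so it carries zero-sum sequences to zero-sum sequences. Starting from the zero-sum sequence $J$ in $G$, I will iteratively construct disjoint subsequences $S_1, \ldots, S_{2m-1}$ of $J$ as follows. At step $i$, set $J_i := J \setminus (S_1 \cup \cdots \cup S_{i-1})$. Inductively, $\pi(J_i)$ is zero-sum in $(\Z/n_1\Z)^2$, and a short computation using the hypothesis $|J| \geq 2n_1 + 2n_2 - \ell + 1$ shows that $|J_i| \geq 4n_1 - \ell + 1$ precisely for $i \leq 2m-1$ (the $\ell$ terms cancel, reducing the inequality to $(i+1)n_1 \leq 2n_2$). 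Since the integer $\ell$ in the statement coincides with the BW quantity for $n_1$, Theorem \ref{thm:BW2} supplies a zero-sum subsequence $\pi(S_i)$ of length $n_1$ in $\pi(J_i)$. Lifting to $G$, the sum of $S_i$ must have the form $(0, k_i n_1)$ for some $k_i \in \{0, 1, \ldots, m-1\}$, since its first coordinate vanishes in $\Z/n_1\Z$ and its second is a multiple of $n_1$ in $\Z/n_2\Z$. This guarantees $\pi(J_{i+1}) = \pi(J_i) - \pi(S_i) = \pi(J_i)$ remains zero-sum in $(\Z/n_1\Z)^2$, closing the induction.

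After all $2m-1$ extractions, the residues $(k_1, \ldots, k_{2m-1})$ form a sequence in $\Z/m\Z$ of length exactly the EGZ threshold $2m - 1$; hence there exists $I \subseteq \{1, \ldots, 2m-1\}$ with $|I| = m$ and $\sum_{i \in I} k_i \equiv 0 \pmod{m}$. The union $T := \bigcup_{i \in I} S_i$ is then a subsequence of $J$ of length $mn_1 = n_2$, and its sum in $G$ equals $(0, n_1 \sum_{i \in I} k_i) = (0, n_2 \cdot t)$ for some integer $t$, which vanishes modulo $n_2$. This $T$ is the desired zero-sum subsequence of length $n_2$, so $(n_2 \mid J) > 0$.

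The main obstacle is maintaining the zero-sum property of $\pi(J_i)$ throughout the iteration so that Theorem \ref{thm:BW2} can be invoked at every step. This only succeeds because the BW threshold $4n_1 - \ell + 1$ for $(\Z/n_1\Z)^2$ matches the residual length after $i-1$ extractions exactly for $i \leq 2m-1$, and the resulting number of iterations $2m-1$ then matches the EGZ threshold on $\Z/m\Z$ exactly, so no elements of $J$ are wasted. Notably, the case $m = 1$ (i.e.\ $n_1 = n_2$) degenerates correctly: only one extraction is needed, and it directly produces the zero-sum subsequence of length $n_2$ via Theorem \ref{thm:BW2}.
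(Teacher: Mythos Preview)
Your proof is correct. Both your argument and the paper's use a projection-and-block-extraction strategy, but the decompositions differ. The paper chooses $d=\ell-1$ (a divisor of $n_1$), projects to $(\Z/d\Z)^2$, peels off $2m_1+2m_2-3$ blocks of size $d$ (where $n_i=dm_i$) using Theorem~\ref{thm:BW2} and Lemma~\ref{lem:BWlemma3.4}, and then combines $m_2$ of them by strong induction on $\exp(G)$. You instead project directly to $(\Z/n_1\Z)^2$, invoke Theorem~\ref{thm:BW2} itself to extract $2m-1$ blocks of size $n_1$ (where $m=n_2/n_1$), and finish with the one-dimensional Erd\H{o}s--Ginzburg--Ziv theorem on $\Z/m\Z$. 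Your route is more direct: it avoids induction entirely, the $\ell$'s cancel so the block count lands exactly on the EGZ threshold $2m-1$, and the zero-sum hypothesis on $\pi(J_i)$ is transparently maintained at every step. The paper's route instead descends through the smallest nontrivial quotient, in the spirit of how Reiher's theorem is typically bootstrapped to composite moduli.
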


\begin{proof}
Assume $n_1 \neq n_2$, otherwise this is just the $(\Z/n\Z)^2$ case. We proceed by strong induction on the exponent of the group. Note that $\exp(G) = n_2$ in this case. Let $d$ be a divisor of $n_1$ such that $d \mid n_1$, $d < n_2$ and write $n_1 = dm_1$ and $n_2 = dm_2$. Note that $H = \left(\Z/m_1\Z \times \Z/m_2\Z\right)$ is a subgroup of $G$. When $\exp(G) = 2$, the claim is clearly true. Suppose the claim is true for all $\exp(G) < n_2$. First consider a zero-sum sequence of length $2n_1 + 2n_2 - d$. Note that $2n_1 + 2n_2 - d \geq 4d \geq 4d-3$, so by Theorem \ref{thm:BW2} we can remove subsequences of length $d$ with sum $0 \pmod d$ until there are exactly $3d$ remaining. Then by Lemma \ref{lem:BWlemma3.4}, we can break off another $d$ elements to obtain $2m_1 + 2m_2 - 3$ blocks of size $d$, with sums $dx_1, \dots, dx_{2m_1+2m_2-3}$, for some $x_i$. By the induction hypothesis, since \begin{equation*}
    2m_1 + 2m_2 - 3 \geq 2m_1 + 2m_2 - \ell +1, 
\end{equation*}
some $m_2$ of the $x_i$ must sum to $0$ in $(\Z/m_1\Z \times \Z/m_2\Z)$. Combining the corresponding blocks gives a subsequence of length $n_2$ whose sum is zero in $\left(\Z/n_1\Z \times \Z/n_2\Z\right)$. Now note that since $\ell$ is the least integer such that $\ell \nmid n_1$, we have $\ell-1 \mid n_1$. Since $n_1 \mid n_2$, we also have $\ell - 1 \mid n_2$. Letting $d = \ell - 1$ finishes the proof.
\end{proof}

Now we will show that if $|J|$ were any smaller, there couldn't be a zero-sum subsequence of length $n_2$. 
\begin{prop}\label{boundtight}
Suppose $4 \leq \ell \nmid n_2$. There exists a zero-sum sequence in $\left(\Z/n_1\Z \times \Z/n_2\Z\right)$ of length $2n_2 - \ell$ which contains no zero-sum subsequences of length $n_2$. 
\end{prop}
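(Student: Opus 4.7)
The plan is to imitate the shift construction of Proposition~\ref{prop:gcdlowerbound}: produce an explicit multiset in $\Z/n_1\Z \times \Z/n_2\Z$ with the desired property that no length-$n_2$ subsequence is zero-sum, and then add a common vector $(r, s)$ to every element to make the total sum vanish. Because $n_1 \mid n_2$, one has $n_2 \cdot (r, s) \equiv (0, 0)$ in the group, so translating by $(r, s)$ never changes whether a length-$n_2$ subsequence is zero-sum. This decouples the two conditions cleanly.

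Set $g = \gcd(\ell, n_2)$. The hypothesis $\ell \nmid n_2$ gives $1 \leq g \leq \ell - 1$. For $\ell > n_2$ the required length $2n_2 - \ell$ is already less than $n_2$, so the constant sequence of $(0,0)$'s trivially works; I therefore assume $\ell \leq n_2$ below. I would then take as the unshifted sequence
\[
    (1,0) \text{ with multiplicity } c = n_2 - \ell + g \quad \text{and} \quad (1,1) \text{ with multiplicity } d = n_2 - g.
\]
Observe that $c + d = 2n_2 - \ell$, and that $\ell > g$ together with $g \geq 1$ force $c, d \in [0, n_2 - 1]$.

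To verify that no zero-sum subsequence of length $n_2$ exists, note that any such subsequence uses $\alpha$ copies of $(1, 0)$ and $\beta$ copies of $(1, 1)$ with $\alpha + \beta = n_2$, summing to $(n_2, \beta)$, which equals $(0, \beta)$ in $\Z/n_1\Z \times \Z/n_2\Z$ because $n_1 \mid n_2$. Zero-sumness forces $\beta \equiv 0 \pmod{n_2}$, i.e., $\beta \in \{0, n_2\}$, but $\beta \leq d < n_2$ and $\alpha = n_2 - \beta \leq c < n_2$ rule both out. Next, to pick the shift: the first coordinate of the shifted total sum is $\equiv -\ell(1+r) \pmod{n_1}$ and the second is $\equiv -\ell s + d \pmod{n_2}$, so the shifted sum vanishes iff
\[
    \ell(1 + r) \equiv 0 \pmod{n_1} \quad \text{and} \quad \ell s \equiv d \equiv -g \pmod{n_2}.
\]
The first is solved by $r = -1$, and the second is solvable precisely because $g = \gcd(\ell, n_2)$ divides $-g$.

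The step requiring the most care is the choice of multiplicities: both the "all $(1, 0)$'s" and "all $(1, 1)$'s" length-$n_2$ subsequences must be blocked by the strict inequalities $c, d < n_2$, and simultaneously $d \pmod{n_2}$ must be divisible by $\gcd(\ell, n_2)$ so that the shift system is solvable. Taking $d = n_2 - g$ is exactly what aligns these two requirements, and is the key idea behind the construction.
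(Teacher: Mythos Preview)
Your proof is correct and follows essentially the same approach as the paper: set $g=\gcd(\ell,n_2)$, use a two-element multiset with multiplicities $n_2-\ell+g$ and $n_2-g$, and then solve a congruence modulo $n_2$ (solvable precisely because $g\mid g$) to find a global shift making the total sum zero. The paper uses $(0,0)$ and $(1,1)$ as the two base elements rather than your $(1,0)$ and $(1,1)$, but this is an immaterial pre-translation; your argument is otherwise the same, and your explicit treatment of the case $\ell>n_2$ and of why the shift preserves the ``no length-$n_2$ zero-sum subsequence'' property are welcome additions.
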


\begin{proof}
Let $g := \gcd(\ell,n_2)$. Consider a sequence of the form 
\begin{align*}
    (0,0) \ \ &a = n_2-\ell+g \\
    (1,1) \ \ &b = n_2 - g.
\end{align*}
It is easy to verify that this does not contain a zero-sum subsequence of length $n_2$. We claim there exists $(r,s) \in \left(\Z/n_1\Z \times \Z/n_2\Z\right)$ such that adding $(r,s)$ to each term will result in a zero-sum sequence. Note again that adding $(r,s)$ to each element won't change the fact that there is no zero-sum subsequence of length $n_2$.
Since $\ell \nmid n_2$, $g \leq \ell/2$. Therefore $a \leq n_2 - \ell/2  \leq n_2 - 2$, and $g \geq 1$, so $b \leq n_2-1$. To find $(r,s)$ we need only to satisfy the divisibility relations
\begin{align*}
    r(-\ell) &\equiv g \pmod{n_1} \\
    s(-\ell) &\equiv g \pmod{n_2}.
\end{align*}
By the definition of $g$, we can find solutions $(r,s)$ to make the sequence zero-sum. 
\end{proof}

Proposition \ref{n1n2prop} and \ref{boundtight} together imply Theorem \ref{thmtwo}. For the proof of Theorem \ref{thmthree}, we begin with the following corollary. 

\begin{cor}\label{cor:n2t}
Let $\ell$ be the smallest integer such that $\ell \geq 4$ and $\ell \nmid n_2$. Let $t \geq 1$ and $n_1 \mid n_2$. If $J$ is a zero-sum sequence in $\left(\Z/n_1\Z \times \Z/n_2\Z\right)$ of length at least $2n_1 + (t+1)n_2 - \ell + 1$, then $(n_2t \mid J) > 0$. 
\end{cor}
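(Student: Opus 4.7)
The plan is to prove Corollary \ref{cor:n2t} by induction on $t$, peeling off one zero-sum subsequence of length $n_2$ at each step. This parallels the strategy used for Corollary \ref{cor:modegzallt} in the $(\Z/n\Z)^2$ setting. The base case $t = 1$ is exactly Proposition \ref{n1n2prop}: if $J$ is a zero-sum sequence with $|J| \geq 2n_1 + 2n_2 - \ell + 1$, then $(n_2 \mid J) > 0$.

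For the inductive step, assume the result for $t - 1$ with $t \geq 2$, and let $J$ be a zero-sum sequence of length at least $2n_1 + (t+1)n_2 - \ell + 1$. Since $t \geq 2$, this length is at least $2n_1 + 2n_2 - \ell + 1$, so Proposition \ref{n1n2prop} produces a zero-sum subsequence $J_0 \subseteq J$ of length $n_2$. Removing $J_0$ leaves a subsequence $J'$ which is still zero-sum (because both $J$ and $J_0$ sum to zero), and whose length is at least
\begin{equation*}
2n_1 + (t+1)n_2 - \ell + 1 - n_2 \;=\; 2n_1 + tn_2 - \ell + 1.
\end{equation*}
This matches the inductive hypothesis with $t - 1$ in place of $t$, so $J'$ contains a zero-sum subsequence of length $n_2(t-1)$. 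Combining this with $J_0$ yields a zero-sum subsequence of $J$ of length $n_2 t$, as required.

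There is no real obstacle once Proposition \ref{n1n2prop} is in hand: the induction is a straightforward \emph{extract-and-recurse} argument. The one point to verify is that the complement of a zero-sum extraction from a zero-sum sequence is again zero-sum, which is immediate from additivity in $\Z/n_1\Z \times \Z/n_2\Z$. All of the difficulty has already been absorbed into the proof of Proposition \ref{n1n2prop}.
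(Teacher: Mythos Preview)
Your proof is correct and follows essentially the same induction-on-$t$ strategy as the paper's, with the only cosmetic difference being the order of extraction: the paper first applies the inductive hypothesis to pull off a zero-sum subsequence of length $(t-1)n_2$ and then invokes Proposition~\ref{n1n2prop} on the remaining $2n_1 + 2n_2 - \ell + 1$ elements, whereas you peel off a single block of length $n_2$ first and then recurse. Both orderings are valid and the argument is otherwise identical.
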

\begin{proof}
We proceed by induction on $t$. Note the base case $t = 1$ is given by Proposition \ref{n1n2prop}. Now suppose the statement is true for positive integers less than $t > 1$. Then $J$ contains a zero-sum sequence of length $(t-1)n_2$. Remove this sequence from $J$. Then $J$ has $2n_1 + 2n_2 - \ell + 1$ elements remaining, which sum to zero since $J$ was zero-sum. This reduces to the base case, so $J$ contains a zero-sum subsequence of length $n_2$. Combining this with the $(t-1)n_2$ length sequence gives a zero-sum subsequence of length $n_2t$. 
\end{proof}

Corollary \ref{cor:n2t} 
proves Theorem \ref{thmthree}.

\section{Proofs of Theorem \ref{thm:egzprimes} and Corollary \ref{cor:egznonprimeintro}}
\begin{prop}\label{prop:egzlowerbound}
Let $t \geq 1$ and $n \geq 2$. There exists a sequence in $\left(\Z/n\Z\right)^2$ of length $(t+2)n - 3$ which contains no zero-sum subsequence of length $nt$. 
\end{prop}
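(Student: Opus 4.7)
The plan is to exhibit an explicit sequence and verify directly that it admits no zero-sum subsequence of length $nt$. This situation is simpler than the modified case (Proposition \ref{prop:gcdlowerbound}) because we no longer need the ambient sequence itself to be zero-sum, so we have the freedom to use copies of $(1,0)$ and $(0,1)$ without having to balance them through an auxiliary shift by some $(r,s)$. This should let us match the target length $(t+2)n - 3$ with a very clean three-element support.

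The construction I would propose is the sequence consisting of
\[
a = tn-1 \text{ copies of } (0,0), \quad b = n-1 \text{ copies of } (1,0), \quad c = n-1 \text{ copies of } (0,1).
\]
Its length is $a + b + c = (tn-1) + (n-1) + (n-1) = (t+2)n - 3$, as required. So the main point to verify is the absence of a length-$nt$ zero-sum subsequence.

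For the verification, I would let a hypothetical zero-sum subsequence of length $nt$ use $a'$ copies of $(0,0)$, $b'$ copies of $(1,0)$ and $c'$ copies of $(0,1)$, with
\[
a' + b' + c' = nt, \quad 0 \le a' \le tn-1, \quad 0 \le b', c' \le n-1.
\]
The sum equals $(b', c') \in (\Z/n\Z)^2$, which must be $(0,0)$. Since $0 \le b', c' \le n-1$, this forces $b' = c' = 0$, hence $a' = nt$; but $a' \le tn - 1$, a contradiction.

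The whole argument is essentially just bookkeeping, so there is no real obstacle; the only thing to be careful about is making sure the ``one short'' count $tn-1$ of $(0,0)$'s lines up with the claimed length $(t+2)n-3$, which it does, and that the coordinate argument really rules out nontrivial cancellation, which it does because each of $(1,0)$ and $(0,1)$ contributes to only one coordinate and the counts $b', c'$ are bounded by $n-1 < n$.
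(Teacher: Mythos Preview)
Your proof is correct and matches the paper's approach exactly: the same three-element construction and the same coordinate argument showing $b'=c'=0$, hence $a'=nt>tn-1$. The only cosmetic difference is that the paper phrases the verification as ``no nonempty zero-sum subsequence among the nonzero elements, otherwise pad with $(0,0)$'s,'' whereas you argue directly on a putative length-$nt$ subsequence; the content is identical.
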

\begin{proof}
Consider the following sequence:
\begin{align*}
    (0,0) \ \ a &= tn - 1 \\
    (1,0) \ \ b &= n-1 \\
    (0,1) \ \ c &= n-1. 
\end{align*}
We clearly cannot make a sequence of $tn$ $(0,0)$'s. It suffices to verify that there does not exist a zero-sum subsequence of any length among the nonzero elements. Otherwise, we could just add enough $(0,0)$'s to get a zero-sum subsequence of length $tn$. Suppose we use $i$ $(1,0)$'s, and $j$ $(0,1)$'s, where $0 \leq i,j \leq n-1$. In order for the subsequence to be zero-sum, necessarily we would need 
\begin{equation*}
    i \equiv 0 \pmod{n} \text{ and } j \equiv 0 \pmod{n}.
\end{equation*}
Since $0 \leq i,j \leq n-1$, the only solution is $i = j=0$. Hence there is no zero-sum subsequence. 
\end{proof}

This gives the lower bound in both Theorem \ref{thm:egzprimes} and Corollary \ref{cor:egznonprimeintro}. 

To prove Theorem \ref{thm:egzprimes}, we will need the following preliminary lemma. 

\begin{lem}[\cite{R}, Corollary 2.3]\label{lem:reiherlemma}
Let $p$ be a prime, and let $J$ be a sequence of elements in $\left(\Z/p\Z\right)^2$. If $|J| = 3p-2$ or $|J| = 3p-1$, then 
\begin{equation*}
    1 - (p \mid J) + (2p \mid J) \equiv 0 \pmod p. 
\end{equation*}
\end{lem}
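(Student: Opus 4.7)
The plan is to use Reiher's polynomial method over $\mathbb{F}_p$: exhibit a polynomial $F(X_1, \ldots, X_n)$ of total degree less than $n(p-1)$ whose sum $\sum_{x \in \mathbb{F}_p^n} F(x)$ encodes the quantity $1 - N_p + N_{2p} \pmod p$, where $N_k := (k \mid J)$. Since any polynomial of degree $< n(p-1)$ has vanishing $\mathbb{F}_p^n$-sum modulo $p$ (the $p$-divisibility refinement of Chevalley--Warning), this would immediately yield the claim.

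Write $J = (v_1, \ldots, v_n)$ with $v_i = (a_i, b_i) \in \mathbb{F}_p^2$ and $n \in \{3p-2, 3p-1\}$. The essential building block is the indicator polynomial
\[
H(X) \;=\; \Bigl(1 - \bigl(\textstyle\sum_{i=1}^n a_i X_i\bigr)^{p-1}\Bigr)\Bigl(1 - \bigl(\textstyle\sum_{i=1}^n b_i X_i\bigr)^{p-1}\Bigr)
\]
of degree $2(p-1)$, which by Fermat's little theorem equals $1$ when $\sum_i x_i v_i = 0 \in \mathbb{F}_p^2$ and $0$ otherwise. One then multiplies $H$ by a weight polynomial $W(X)$ of appropriately controlled degree, designed so that the combined sum picks out exactly the characteristic vectors $\chi_S$ of zero-sum subsequences of sizes $p$ and $2p$, with $\chi_\emptyset$ contributing the leading $1$. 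A natural candidate is $W(X) = 1 - (\sum_i X_i^{p-1})^{p-1}$, which detects support sizes divisible by $p$ and (since $n < 3p$) isolates the sizes $0, p, 2p$; for this $W$ one has $\deg(WH) = (p-1)(p+1) = p^2 - 1 < n(p-1)$ for $n \geq p+2$, so the Chevalley--Warning vanishing applies. Stratifying $\sum_x W(x) H(x)$ by support size yields
\[
1 \;+\; \sum_{|S|=p} M_S \;+\; \sum_{|S|=2p} M_S \;\equiv\; 0 \pmod{p}, \qquad M_S := \#\bigl\{y \in (\mathbb{F}_p^*)^S : \textstyle\sum_{i \in S} y_i v_i = 0\bigr\}.
\]

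The main obstacle is the precise construction of $W$: the naive choice above produces strata counts $M_S$ which, by inclusion--exclusion, depend on the linear-dependence pattern of $\{v_i : i \in S\}$ (via the rank of the associated $2 \times |S|$ matrix) rather than solely on whether $S$ itself is zero-sum, so $\sum_{|S|=k} M_S \bmod p$ does not immediately reduce to $\pm N_k$. The weight must therefore be engineered so that after stratification the rank-profile contributions cancel and only the characteristic-vector contributions survive, collapsing the sum to $1 - N_p + N_{2p}$ modulo $p$. Achieving this cancellation --- either by refining $W$ to a more intricate polynomial involving the sums $\sum a_i X_i, \sum b_i X_i$ beyond their indicator behaviour, or by combining the above identity with a second Chevalley--Warning identity for an auxiliary polynomial so that the rank-profile terms cancel against each other --- is the technical heart of Reiher's Theorem~2.2 in \cite{R}, of which the present lemma is the direct consequence.
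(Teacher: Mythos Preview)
Your proposal has a genuine gap, and it stems from the construction of $H$. By letting the $X_i$ enter \emph{linearly} into $\sum a_i X_i$ and $\sum b_i X_i$, the event $H(x)=1$ detects arbitrary linear relations $\sum_i x_i v_i = 0$ rather than the specific relation $\sum_{i\in S} v_i = 0$ (all coefficients equal to $1$) that defines a zero-sum subsequence. That is exactly why your strata counts $M_S$ carry rank information about $\{v_i:i\in S\}$; no refinement of the weight $W$ will undo this, and the ``technical heart'' you defer to Reiher is not of the form you describe.

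The remedy is elementary: replace $X_i$ by $X_i^{p-1}$ in the coordinate sums. Equivalently, apply Chevalley--Warning directly to the system $P_1=P_2=P_3=0$ with $P_1=\sum_i X_i^{p-1}$, $P_2=\sum_i a_i X_i^{p-1}$, $P_3=\sum_i b_i X_i^{p-1}$, each of degree $p-1$. For $n=|J|\in\{3p-2,3p-1\}$ the degree sum $3(p-1)$ is strictly less than $n$, so the number of common zeros is $\equiv 0\pmod p$. Since $X_i^{p-1}\in\{0,1\}$ on $\F_p$, a point $x$ is a common zero iff its support $S$ satisfies $|S|\equiv 0\pmod p$ and $\sum_{i\in S}v_i=0$, and each such $S$ contributes exactly $(p-1)^{|S|}$ points. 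As $|S|\le n<3p$, the only sizes are $0,p,2p$, giving
\[
1+(p-1)^p\,(p\mid J)+(p-1)^{2p}\,(2p\mid J)\equiv 0\pmod p,
\]
i.e., $1-(p\mid J)+(2p\mid J)\equiv 0\pmod p$. No auxiliary identities or delicate cancellations are needed. This is precisely the method the paper itself uses for the three-dimensional analogue in Proposition~\ref{primeprop} and Corollary~\ref{primecor}.
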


\begin{prop}\label{prop:egzprime}
If $J$ is a sequence in $\left(\Z/p\Z\right)^2$ of length $4p-2$, then $(2p \mid J) > 0$. 
\end{prop}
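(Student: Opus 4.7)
The plan is to combine Reiher's theorem (Theorem \ref{thm:reiher}) with the preliminary congruence from Lemma \ref{lem:reiherlemma}. Since $|J| = 4p-2 \geq 4p-3$, Theorem \ref{thm:reiher} guarantees the existence of a zero-sum subsequence $S \subseteq J$ of length $p$. I would then pass to the complementary subsequence $J' := J \setminus S$, which has exactly $|J| - p = 3p - 2$ elements, putting it precisely in the range where Lemma \ref{lem:reiherlemma} applies.

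Applying Lemma \ref{lem:reiherlemma} to $J'$ yields
\[
1 - (p \mid J') + (2p \mid J') \equiv 0 \pmod{p},
\]
and the argument then splits into two cases. If $(2p \mid J') > 0$, then $J'$ (and hence $J$) already contains a zero-sum subsequence of length $2p$ and we are done. Otherwise $(2p \mid J') = 0$, and the congruence forces $(p \mid J') \equiv 1 \pmod{p}$, which in particular gives $(p \mid J') \geq 1$. In that case I would pick any zero-sum subsequence $S' \subseteq J'$ of length $p$; since $S$ and $S'$ are automatically disjoint (as $S' \subseteq J' = J \setminus S$), the union $S \cup S'$ is a zero-sum subsequence of $J$ of length $2p$.

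I do not anticipate any serious obstacle: the statement is essentially a one-step consequence of Theorem \ref{thm:reiher} and Lemma \ref{lem:reiherlemma}, with the key observation being that $(4p-2) - p = 3p-2$ lands exactly in the admissible range for the Reiher congruence. The only small point to be careful about is interpreting the congruence $(p \mid J') \equiv 1 \pmod{p}$ in the non-negative integers, but since this count is a non-negative integer, the congruence immediately implies $(p \mid J') \geq 1$, which is all we need.
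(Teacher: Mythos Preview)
Your proposal is correct and follows essentially the same argument as the paper: apply Theorem~\ref{thm:reiher} to extract a zero-sum $p$-subsequence, then use Lemma~\ref{lem:reiherlemma} on the remaining $3p-2$ elements and split into cases. If anything, your version is slightly more careful with notation, explicitly distinguishing $J'$ from $J$ and noting the disjointness of $S$ and $S'$.
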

\begin{proof}
Note that $4p-2 > 4p-3$. By Theorem \ref{thm:reiher}, $J$ contains a zero-sum subsequence of length $p$. Removing the sequence from $J$, we are left with $3p-2$ elements. By Lemma \ref{lem:reiherlemma} we have
\begin{equation*}
    1 - (p \mid J) + (2p \mid J) \equiv 0 \pmod p.
\end{equation*}
If $(2p \mid J) > 0$, we're done and have found our zero-sum subsequence of length $2p$. Otherwise, $(2p \mid J) = 0$ which implies 
\begin{equation*}
    (p \mid J) \equiv 1 \pmod p.
\end{equation*}
Therefore, $(p \mid J) > 0$, so there is another zero-sum subsequence of length $p$. Combining this with the first one gives a zero-sum subsequence of length $2p$. 
\end{proof}

\begin{cor}\label{cor:egzprime}
Let $t \geq 2$. If $J$ is a sequence in $\left(\Z/p\Z\right)^2$ of length $(t+2)p - 2$, then $(tp \mid J) > 0$. 
\end{cor}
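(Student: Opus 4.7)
My plan is to prove Corollary \ref{cor:egzprime} by induction on $t$, with base case $t = 2$ furnished directly by Proposition \ref{prop:egzprime}, since $(t+2)p - 2 = 4p-2$ in that case and the conclusion $(2p \mid J) > 0$ is exactly what we want. This mirrors the inductive pattern already used in the paper (for instance in Corollary \ref{cor:modegzallt} and Corollary \ref{cor:n2t}), with the crucial difference that $J$ is not assumed zero-sum, so the inductive engine must be Reiher's Theorem \ref{thm:reiher} rather than Proposition \ref{prop:BWlemma3.4analogue}.

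For the inductive step, assume the statement holds at $t - 1$ for some $t \geq 3$, and let $J$ be a sequence of length $(t+2)p - 2$. The key observation is that $(t+2)p - 2 \geq 5p - 2 > 4p - 3$, so Theorem \ref{thm:reiher} applies and produces a zero-sum subsequence $J_0 \subseteq J$ of length $p$. Removing $J_0$ leaves a sequence $J' = J \setminus J_0$ of length
\[
(t+2)p - 2 - p \;=\; (t+1)p - 2 \;=\; ((t-1)+2)p - 2,
\]
which is exactly the hypothesis length for the inductive claim at $t - 1$. Applying the inductive hypothesis yields a zero-sum subsequence of $J'$ of length $(t-1)p$; concatenating with $J_0$ produces a zero-sum subsequence of $J$ of length $tp$, completing the induction.

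There is essentially no obstacle here; the argument is a routine ``peel off one zero-sum $p$-block and recurse'' induction. The only place that requires mild care is checking that the length inequality needed for Reiher's theorem, namely $(t+2)p - 2 \geq 4p - 3$, holds throughout the inductive range $t \geq 3$, which is immediate. Note also that unlike the zero-sum setting of Corollary \ref{cor:modegzallt}, we do not need to worry about $J'$ being zero-sum, precisely because the inductive hypothesis here is stated for arbitrary sequences. Together with Proposition \ref{prop:egzlowerbound}, which supplies the matching lower bound $(t+2)p - 3$, this corollary establishes Theorem \ref{thm:egzprimes}.
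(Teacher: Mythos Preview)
Your proof is correct and follows essentially the same approach as the paper: induction on $t$ with base case $t=2$ given by Proposition~\ref{prop:egzprime}, and the inductive step peeling off a zero-sum $p$-block via Theorem~\ref{thm:reiher} before applying the hypothesis to the remaining $((t-1)+2)p-2$ elements.
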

\begin{proof}
We proceed by induction on $t$. The case $t=2$ follows from Proposition \ref{prop:egzprime}. Suppose the statement is true for positive integers less than $t > 2$. Since $t \geq 2$, we have
\begin{equation*}
    (t+2)p-2 \geq 4p-3.
\end{equation*}
By Theorem \ref{thm:reiher} $J$ has a zero-sum subsequence of length $p$. Now remove the sequence so that $J$ has $((t-1) + 2)p - 2$ elements remaining. By the induction hypothesis, $J$ has a zero-sum subsequence of length $(t-1)p$. Combining this with the zero-sum subsequence of length $p$ yields a zero-sum subsequence of length $tp$.
\end{proof}
Proposition \ref{prop:egzlowerbound} and Corollary \ref{cor:egzprime} imply Theorem \ref{thm:egzprimes}.\\

Now we prove a version of Proposition \ref{prop:egzprime} for non-prime $n$. 
\begin{prop}\label{prop:egznonprime}
Write $n = pm$. If $J$ is a sequence in $\left(\Z/n\Z\right)^2$ of length $4n - 2 +(m-1)$, then $(2n \mid J) > 0$. 
\end{prop}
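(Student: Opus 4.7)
\textbf{Proof plan for Proposition \ref{prop:egznonprime}.}

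The plan is to induct on $n$, following the block-extraction strategy used in Proposition \ref{n1n2prop}. The base case $n = p$ prime reduces immediately to Proposition \ref{prop:egzprime}: then $m = 1$, so $|J| = 4p - 2$.

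For the inductive step, let $n$ be composite and take $p$ to be the \emph{smallest} prime factor of $n$, with $m = n/p$; since every prime factor of $m$ is also a prime factor of $n$, each such prime is $\geq p$. Reducing $J$ modulo $p$ and repeatedly applying Theorem \ref{thm:reiher}, we extract subsequences of $J$ of length $p$ whose terms sum to $(0,0) \pmod p$. Each such block has sum in the subgroup $p(\Z/n\Z)^2 \cong (\Z/m\Z)^2$, say of the form $p y_i$ with $y_i \in (\Z/m\Z)^2$.

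Write $m = p'm'$ with $p'$ a prime factor of $m$ and apply the inductive hypothesis to $(\Z/m\Z)^2$: a sequence of length $4m + m' - 3$ in $(\Z/m\Z)^2$ contains a zero-sum subsequence of length $2m$. Accordingly, we aim to extract $K = 4m + m' - 3$ blocks of length $p$ from $J$. The pullability check is that before the $K$-th extraction, the remaining portion of $J$ has at least $4p - 3$ elements, i.e.\ $|J| - (K-1)p \geq 4p - 3$. Using $|J| = 4n + m - 3$ and $Kp = 4n + pm' - 3p$, this inequality simplifies to $pm' \leq p'm' = m$, equivalently $p \leq p'$, which holds since $p$ is the smallest prime factor of $n$. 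By the inductive hypothesis some $2m$ of the $y_i$ sum to zero in $(\Z/m\Z)^2$; combining the corresponding size-$p$ blocks produces a subsequence of $J$ of length $2mp = 2n$ whose sum in $(\Z/n\Z)^2$ is $p \cdot 0 = 0$, giving $(2n \mid J) > 0$.

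The main obstacle is arranging the extraction count to match the length the inductive hypothesis demands; this is what forces the specific choice $p \leq p'$, and thus taking $p$ to be the smallest prime factor of $n$. With that choice the arithmetic closes: the length $4n + m - 3$ is precisely enough to guarantee $K = 4m + m' - 3$ successful extractions of zero-sum-mod-$p$ blocks and thereby to reduce the problem to $(\Z/m\Z)^2$.
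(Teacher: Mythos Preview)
Your inductive argument is sound, but it establishes the proposition only for the particular choice $p = $ smallest prime factor of $n$. The proposition as stated, and as it is used to derive Corollary~\ref{cor:egznonprimeintro}, is meant to hold for \emph{any} prime factor $p \mid n$; indeed the sharpest upper bound in that corollary comes from taking $p$ to be the \emph{largest} prime factor, so that $m = n/p$ is minimal. Your constraint $p \le p'$ fails precisely when $p$ is not smallest: for instance with $n = 6$, $p = 3$, $m = 2$ one has $|J| = 23$ and $K = 4m + m' - 3 = 6$, but after five extractions only $23 - 15 = 8 < 4p - 3 = 9$ elements remain, so the sixth block cannot be pulled.

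The paper avoids this obstruction by reversing the roles of $p$ and $m$: it extracts blocks of size $m$ that are zero-sum modulo $m$ (via Theorem~\ref{thm:reiher} applied in $(\Z/m\Z)^2$), rather than blocks of size $p$. Since $|J| = 4n + m - 3 = (4p - 2)m + (3m - 3)$, one can peel off exactly $4p - 2$ such blocks, each with sum $m x_i$ for some $x_i \in (\Z/p\Z)^2$, and then apply Proposition~\ref{prop:egzprime} directly to $x_1,\dots,x_{4p-2}$ to find $2p$ of them summing to zero. Combining those blocks yields the desired zero-sum subsequence of length $2pm = 2n$. This is a single-step reduction to the prime case --- no induction on $n$ is needed --- and it works uniformly for every prime $p \mid n$.
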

\begin{proof}
Note that $4n - 2 + (m-1) > 4m - 3$, so we can find some $m$ elements whose sum is $0 \pmod m$. Denote their sum by $mx_1$ and remove the elements from $J$. We can continue doing this until there are exactly $3m-3$ elements remaining. This gives us $4p-2$ blocks of size $m$ whose sums are $mx_1, \dots, mx_{4p-2}$ for some $x_i$'s. By Proposition \ref{prop:egzprime}, there is some $2p$ of the $x_i$'s summing to $0 \pmod p$. Combining the blocks gives us $2n$ elements whose sum is $0 \pmod n$. 
\end{proof}
\begin{cor}\label{cor:egznonprime}
Write $n = pm$ and let $t \geq 2$. If $J$ is a sequence in $\left(\Z/n\Z\right)^2$ of length $(t+2)n - 2 + (m-1)$, then $(tn \mid J) > 0$. 
\end{cor}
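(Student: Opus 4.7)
The plan is to mimic the inductive argument used in Corollary \ref{cor:egzprime}, but with Proposition \ref{prop:egznonprime} replacing Proposition \ref{prop:egzprime} as the base case. Proceed by induction on $t \geq 2$. The base case $t = 2$ is exactly Proposition \ref{prop:egznonprime}, since $(t+2)n - 2 + (m-1) = 4n - 2 + (m-1)$ there.

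For the inductive step, assume the result for all values in $\{2, 3, \dots, t-1\}$ with $t \geq 3$, and let $J$ have length $(t+2)n - 2 + (m-1)$. First check the hypothesis of Reiher's theorem (Theorem \ref{thm:reiher}) is met: since $t \geq 3$ and $m \geq 1$,
\begin{equation*}
    (t+2)n - 2 + (m-1) \;\geq\; 5n + m - 3 \;\geq\; 4n - 3.
\end{equation*}
Hence $J$ contains a zero-sum subsequence of length $n$. Remove these $n$ elements from $J$; the remaining sequence has length
\begin{equation*}
    (t+2)n - 2 + (m-1) - n \;=\; ((t-1)+2)n - 2 + (m-1).
\end{equation*}
By the induction hypothesis applied with $t-1 \geq 2$, this shortened sequence contains a zero-sum subsequence of length $(t-1)n$. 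Combining the zero-sum subsequence of length $n$ with this zero-sum subsequence of length $(t-1)n$ yields a zero-sum subsequence of $J$ of length $tn$, proving $(tn \mid J) > 0$.

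The argument is essentially routine once Proposition \ref{prop:egznonprime} is in hand; the only subtlety is verifying that the length never drops below $4n-3$ during the induction, which follows from $t \geq 3$ in the inductive step. There is no real obstacle here beyond bookkeeping: the nontrivial step has already been isolated in Proposition \ref{prop:egznonprime}, where the block-reduction trick (passing from scale $n$ to scale $m$ via Theorem \ref{thm:BW2} and then applying Proposition \ref{prop:egzprime}) does the actual work.
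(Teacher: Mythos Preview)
Your proof is correct and follows essentially the same inductive argument as the paper: base case $t=2$ via Proposition~\ref{prop:egznonprime}, and for $t\ge 3$ peel off a zero-sum block of length $n$ using Theorem~\ref{thm:reiher}, then apply the induction hypothesis. One small slip in your closing commentary (not in the proof itself): the block reduction inside Proposition~\ref{prop:egznonprime} uses Theorem~\ref{thm:reiher} (the Kemnitz--Reiher bound $4m-3$ over $(\Z/m\Z)^2$), not Theorem~\ref{thm:BW2}.
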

\begin{proof}
We induct on $t$. The $t = 2$ case is Proposition \ref{prop:egznonprime}. Suppose the statement is true for positive integers less than $t > 2$. Since $t \geq 2$, we have
\begin{equation*}
    (t+2)n - 2 + (m-1) \geq 4n - 3.
\end{equation*}
By Theorem \ref{thm:reiher}, $J$ has a zero-sum subsequence of length $n$. Removing it leaves us with $((t-1) + 2)n - 2 + (m-1)$ elements. By the induction hypothesis, we can remove a zero-sum subsequence of length $(t-1)n$. Combining these elements with the zero-sum subsequence of length $n$ yields a zero-sum subsequence of length $tn$, as desired. 
\end{proof}

Corollary \ref{cor:egznonprime} and Proposition \ref{prop:egzlowerbound} imply Corollary \ref{cor:egznonprimeintro}.
\section{Bounds for Modified EGZ Constants in $(\Z/p\Z)^d$}

\begin{prop}\label{primeprop}https://www.overleaf.com/project/5cf1d1637138370431c623d4
Let $p>3$ be prime, and let $J$ be a sequence of elements in $\left(\Z/p\Z\right)^3$. Then if $|J| = 4p - 4$, then 
\begin{equation*}
1 - (p-1 \mid J) - (p \mid J) + (2p-1 \mid J) + (2p \mid J) - (3p - 1 \mid J) - (3p \mid J) \equiv 0 \mod p.
\end{equation*}
\end{prop}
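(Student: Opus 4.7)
The plan is to adapt Reiher's polynomial-method proof of Lemma \ref{lem:reiherlemma} to the three-dimensional setting. Writing $J = (a_1, \ldots, a_N)$ with $N = 4p-4$ and $a_i = (a_i^{(1)}, a_i^{(2)}, a_i^{(3)}) \in \F_p^3$, I would first introduce the zero-sum indicator polynomial
\[
\Phi(X_1, \ldots, X_N) \;=\; \prod_{j=1}^{3} \left(1 - \Big(\sum_{i=1}^{N} X_i a_i^{(j)}\Big)^{p-1}\right) \;\in\; \F_p[X_1, \ldots, X_N],
\]
which has total degree at most $3(p-1) = 3p-3$ and whose value at any $\epsilon \in \{0,1\}^N$ is $1$ if the subsequence indexed by the support of $\epsilon$ is zero-sum and $0$ otherwise.

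Next, I would construct the length-weight polynomial $W(Y) := (Y+1)^{p-1} - Y^{p-1} \in \F_p[Y]$, whose crucial property is that the top $Y^{p-1}$ terms cancel, leaving $\deg W = p - 2$. Using $x^{p-1} \equiv [x \not\equiv 0] \pmod p$, we have $W(k) = 1$ when $k \equiv 0 \pmod p$, $W(k) = -1$ when $k \equiv -1 \pmod p$, and $W(k) = 0$ otherwise. Setting $L(X) = X_1 + \cdots + X_N$ and $Q(X) = W(L(X))\,\Phi(X)$, the total degree satisfies $\deg Q \leq (p-2) + 3(p-1) = 4p - 5 < N$.

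The heart of the argument is then a standard identity: for any $Q \in \F_p[X_1, \ldots, X_N]$ of total degree strictly less than $N$,
\[
\sum_{\epsilon \in \{0,1\}^N} (-1)^{|\epsilon|}\, Q(\epsilon) \;\equiv\; 0 \pmod p.
\]
This follows because $\sum_\epsilon (-1)^{|\epsilon|} \prod_i \epsilon_i^{a_i}$ equals $(-1)^N$ when every $a_i \geq 1$ and $0$ otherwise, and no monomial of degree $< N$ can use all $N$ variables. Applied to our $Q$ and reorganised by $k = |\epsilon|$, this reads $0 \equiv \sum_{k=0}^{N} (-1)^k W(k)\, (k \mid J) \pmod p$.

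Finally, I would verify that the surviving coefficients match those in the statement. Since $W(k)$ vanishes unless $k \equiv 0$ or $-1 \pmod p$, only the lengths $k \in \{0, p-1, p, 2p-1, 2p, 3p-1, 3p\}$ contribute, as no other such $k$ lies in $[0, N]$ once $p > 3$. A short case check using $(-1)^{jp} = (-1)^j$ and $(-1)^{jp-1} = -(-1)^j$ (valid for odd $p$) shows that $(-1)^k W(k)$ realises exactly the sign pattern $+1, -1, -1, +1, +1, -1, -1$ on this list, yielding the claimed congruence. The main obstacle, and the only creative step, is the choice of $W$: without the cancellation of $Y^{p-1}$ in $(Y+1)^{p-1} - Y^{p-1}$, the degree bound $\deg Q < N$ would fail by one and the alternating sum would no longer vanish identically. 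The remaining work is routine bookkeeping.
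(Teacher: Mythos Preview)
Your argument is correct, and it is a genuine variant of the paper's own proof. The paper follows Reiher's Chevalley--Warning template: it introduces one auxiliary variable $t_{4p-3}$, builds the four polynomials
\[
P_1(t)=\sum_{i=1}^{4p-4} t_i^{p-1}+t_{4p-3}^{p-1},\qquad
P_{j+1}(t)=\sum_{i=1}^{4p-4} a_i^{(j)} t_i^{p-1}\ (j=1,2,3),
\]
observes that the sum of degrees is $4(p-1)<4p-3$, and then partitions the common zero set in $\F_p^{4p-3}$ according to whether $t_{4p-3}$ vanishes; the two pieces produce the $kp$ and $kp-1$ terms respectively. Your route replaces the auxiliary variable by the weight polynomial $W(Y)=(Y+1)^{p-1}-Y^{p-1}$ and the finite-difference identity $\sum_{\epsilon\in\{0,1\}^N}(-1)^{|\epsilon|}Q(\epsilon)=0$ for $\deg Q<N$. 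These are two faces of the same idea: the cancellation of the leading term in $W$ plays exactly the role that the extra variable plays in the Chevalley--Warning count, buying the one missing unit of degree. Your packaging is arguably cleaner (no case split on the auxiliary coordinate, and the generalisation to $(\Z/p\Z)^d$ is immediate by taking $N=(d+1)(p-1)$ and $\Phi$ a product of $d$ factors), while the paper's version has the virtue of being a direct transcription of Reiher's argument. Either way, the final sign bookkeeping you give is correct and yields the stated congruence.
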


To prove the proposition, we use the following classical theorem. 
\begin{thm}[Chevalley-Warning]\label{CW}
Let $n, d_1, \dots, d_r$ be positive integers such that $d_1 + \dots + d_r < n$. For each $1 \leq i \leq r$ let $P_i(t_1, \dots, t_n) \in \F_q[t_1, \dots, t_n]$ be a polynomial of degree $d_i$ with zero constant term. Then there exists $0 \neq x = (x_1, \dots, x_n) \in \F_q^n$ such that $P_i(x) = 0$ for all $1 \leq i \leq r$. Furthermore, let 
\begin{equation}
    Z = \#\{x = (x_1, \dots, x_n) \in \F_q^n \  : \  P_1(x) = \dots = P_r(x) = 0\}.
\end{equation}
Then $Z \equiv 0 \mod p$. 
\end{thm}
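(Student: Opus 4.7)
The plan is to mirror the proof of the $d=2$ analogue (Lemma~\ref{lem:reiherlemma}) by applying the Chevalley--Warning theorem (Theorem~\ref{CW}) with an extra polynomial to handle the third coordinate, and then carefully treating the boundary regime in which the total degree equals the number of variables. Writing $J=(a_1,\dots,a_N)$ with $N=4p-4$ and $a_i=(a_i^{(1)},a_i^{(2)},a_i^{(3)})\in\F_p^3$, I introduce variables $t_1,\dots,t_N$ and form
\begin{equation*}
P_j(t)\;=\;\sum_{i=1}^N a_i^{(j)} t_i^{p-1}\quad(j=1,2,3),\qquad L(t)\;=\;\sum_{i=1}^N t_i^{p-1},
\end{equation*}
all of degree $p-1$. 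By Fermat's little theorem, $t_i^{p-1}$ is the indicator of $t_i\neq 0$; so writing $S(t)=\{i:t_i\neq 0\}$, the joint vanishing $P_1(t)=P_2(t)=P_3(t)=0$ is equivalent to $S(t)$ being a zero-sum subsequence of $J$, while $L(t)\equiv |S(t)|\pmod p$.

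Next I will count, modulo $p$, the two quantities
\begin{equation*}
Z_1 \;=\; \#\{t\in\F_p^N : P_1=P_2=P_3=L=0\},\qquad Z_2 \;=\; \#\{t\in\F_p^N : P_1=P_2=P_3=L+1=0\}.
\end{equation*}
Grouping $t$ by its support, each support $S$ contributes $(p-1)^{|S|}\equiv(-1)^{|S|}\pmod p$ preimages. The lengths in $[0,N]$ satisfying $|S|\equiv 0\pmod p$ are $\{0,p,2p,3p\}$ and those with $|S|\equiv -1\pmod p$ are $\{p-1,2p-1,3p-1\}$; using $(-1)^{jp}=(-1)^j$ and $(-1)^{jp-1}=-(-1)^j$ for odd $p$, a short count gives
\begin{align*}
Z_1 &\;\equiv\; 1-(p\mid J)+(2p\mid J)-(3p\mid J)\pmod p,\\
Z_2 &\;\equiv\; (p-1\mid J)-(2p-1\mid J)+(3p-1\mid J)\pmod p.
\end{align*}
Subtracting, the claimed identity is precisely the congruence $Z_1\equiv Z_2\pmod p$.

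The hard part is verifying this last congruence. Via the standard identity $Z\equiv\sum_t\prod_j(1-P_j(t)^{p-1})\pmod p$ together with the fact that $\sum_{s\in\F_p} s^k$ is nonzero mod $p$ only when $k$ is a positive multiple of $p-1$, each $Z_i$ reduces mod $p$ to the coefficient of $\prod_{i=1}^N t_i^{p-1}$ in $\prod_{j=1}^3 P_j(t)^{p-1}\cdot Q(t)^{p-1}$, taking $Q=L$ for $Z_1$ and $Q=L+1$ for $Z_2$. The sum of degrees is exactly $4(p-1)=N$, placing us at the boundary where Chevalley--Warning gives no free vanishing and one genuinely has to extract this coefficient. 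The key observation (and main obstacle) is that $(L+1)^{p-1}$ and $L^{p-1}$ agree in their top-degree homogeneous component, and the target monomial $\prod_i t_i^{p-1}$ has total degree $(p-1)^2$, matching the top of $Q^{p-1}$; thus only that top piece contributes, and the two coefficients coincide. This forces $Z_1\equiv Z_2\pmod p$, and the proposition follows by rearrangement. Wilson's theorem $(p-1)!\equiv -1\pmod p$ will be invoked to clear the factorial factors that arise from the multinomial coefficients in the coefficient extraction.
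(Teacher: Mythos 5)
Your proposal does not prove the stated theorem. The statement here is the Chevalley--Warning theorem itself --- a general assertion about $r$ polynomials over $\F_q$ whose degrees sum to less than the number of variables --- which the paper quotes as a classical result and uses as a black box. What you have written is instead a proof sketch of Proposition~\ref{primeprop}, the congruence $1-(p-1\mid J)-(p\mid J)+(2p-1\mid J)+(2p\mid J)-(3p-1\mid J)-(3p\mid J)\equiv 0 \pmod p$ for $|J|=4p-4$ in $(\Z/p\Z)^3$; indeed your argument explicitly takes Theorem~\ref{CW} (or at least the power-sum identity $\sum_{s\in\F_p}s^k\equiv 0$ unless $k>0$ and $(p-1)\mid k$, which is the engine of its proof) as an input, so it cannot serve as a proof of it. If the goal were really Theorem~\ref{CW}, the standard argument is: $Z\equiv\sum_{x\in\F_q^n}\prod_{i=1}^r\bigl(1-P_i(x)^{q-1}\bigr)\pmod p$; every monomial in the expansion has total degree at most $(q-1)\sum_i d_i<(q-1)n$, hence some variable appears with exponent less than $q-1$, so each monomial sums to zero over $\F_q^n$ and $Z\equiv 0\pmod p$; since the zero constant terms make the origin a solution, $Z\geq p\geq 2$ and a nonzero common zero exists.

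For what it is worth, as an argument for Proposition~\ref{primeprop} your route is genuinely different from the paper's. The paper adjoins a dummy variable $t_{4p-3}$ to $P_1$ so that the number of variables $4p-3$ strictly exceeds the degree sum $4(p-1)$, applies Theorem~\ref{CW} directly to get that the total solution count vanishes mod $p$, and then partitions the solutions according to whether $x_{4p-3}=0$, which produces the two families of terms $(kp\mid J)$ and $(kp-1\mid J)$ in one stroke. You instead work with exactly $4p-4$ variables at the degree boundary, where Theorem~\ref{CW} gives nothing, and compare the two counts $Z_1,Z_2$ by extracting the coefficient of $\prod_i t_i^{p-1}$. That comparison can be made to work --- only the top homogeneous components contribute, and $L^{p-1}$ and $(L+1)^{p-1}$ share theirs --- but note that the target monomial has total degree $(4p-4)(p-1)=4(p-1)^2$, not $(p-1)^2$ as you wrote, and the paper's dummy-variable device sidesteps this boundary analysis entirely.
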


Now we will prove Proposition \ref{primeprop}. 
\begin{proof}
Let $J = \{(a_n, b_n, c_n) : 1 \leq n \leq 4p-4$\}. Consider the following polynomials over $\F_p[t_1, \dots, t_{4p-3}]$:
\begin{align*}
    P_1(t) &= \sum_{i=1}^{4p-4} t_i^{p-1} + t_{4p-3}^{p-1} \\
    P_2(t) &= \sum_{i=1}^{4p-4} a_it_i^{p-1} \\
    P_3(t) &= \sum_{i=1}^{4p-4} b_it_i^{p-1} \\
    P_4(t) &= \sum_{i=1}^{4p-4} c_it_i^{p-1}. 
\end{align*}
Since $4p-3 > 4p-4$, by Theorem \ref{CW}, there exists $0 \neq x = (x_1, \dots, x_{4p-3})$ such that $P_1(x) = \dots = P_4(x) = 0.$ We partition the solutions according to $(x_1, \dots, x_{4p-4}, 0)$ and $(x_1, \dots, x_{4p-4}, \text{ nonzero})$. 

First we consider solutions of the form $(x_1, \dots, x_{4p-4}, 0)$. Let $I = \{1 \leq i \leq 4p-4 \ : \ x_i \neq 0\}$. Note that $x^{p-1} = 1$ if $x$ is nonzero and $x^{p-1} = 0$ if $x=0$. Then since $P_1(x) = \dots P_4(x) = 0$, we have
\begin{equation*}
    \sum_{i \in I} 1 + 0 = \sum_{i \in I} a_i = \sum_{i \in I} b_i = \sum_{i \in I} c_i \equiv 0 \mod p. 
\end{equation*}
Therefore, $|I| \equiv 0 \mod p$ and, since $0 < |I| \leq 4p-4$, we have $|I| = p, 2p$, or $3p$. Note that this set of solutions contains the zero solution, so the total number of solutions where $x_{4p-3} = 0$ is
\begin{equation*}
    1 + (p-1)^p(p \mid J) + (p-1)^{2p}(2p \mid J) + (p-1)^{3p}(3p \mid J).
\end{equation*}
Now we consider the set of solutions of the form $(x_1, \dots, x_{4p-4}, \text{ nonzero})$. In this case, define $I$ the same way and since $P_1(x) = \dots = P_4(x) = 0$, we have
\begin{equation*}
    \sum_{i \in I} 1 + 1 = \sum_{i \in I} a_i = \sum_{i \in I} b_i = \sum_{i \in I} c_i \equiv 0 \mod p.
\end{equation*}
Therefore, $|I| \equiv -1 \mod p$, and since $0 < |I| \leq 4p-4$, we have $|I| = p-1, 2p-1$, or $3p-1$. Thus the number of solutions is 
\begin{equation*}
    (p-1)^p(p-1 \mid J) + (p-1)^{2p}(2p-1 \mid J) + (p-1)^{3p}(3p-1 \mid J). 
\end{equation*}
Reducing modulo $p$ and combining these with the other set of solutions yields the result. 
\end{proof}

This proof leads us to the following corollary. 
\begin{cor}\label{primecor}
If $|J| = 4p-3, 4p-2$, or  $4p-1$ then 
\begin{equation*}
    1 - (p \mid J) + (2p \mid J) - (3p \mid J) \equiv 0 \mod p
\end{equation*}
\end{cor}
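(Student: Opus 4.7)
The plan is to run the same Chevalley--Warning argument as in the proof of Proposition \ref{primeprop}, but to take advantage of the fact that once $|J| \geq 4p-3$ the degree condition of Theorem \ref{CW} holds on the nose, so the auxiliary padding variable $t_{4p-3}$ used in the proposition is no longer needed. Concretely, write $J = \{(a_n,b_n,c_n) : 1 \leq n \leq |J|\}$ and define the polynomials $P_1(t) = \sum_{i=1}^{|J|} t_i^{p-1}$, $P_2(t) = \sum a_i t_i^{p-1}$, $P_3(t) = \sum b_i t_i^{p-1}$, $P_4(t) = \sum c_i t_i^{p-1}$ in $\F_p[t_1,\dots,t_{|J|}]$. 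Since $\sum_j \deg P_j = 4(p-1) = 4p-4 < 4p-3 \leq |J|$, Theorem \ref{CW} says the total number $Z$ of common zeros in $\F_p^{|J|}$ satisfies $Z \equiv 0 \pmod p$.

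Next I would enumerate these zeros exactly as in Proposition \ref{primeprop}, but now without the case split on $x_{4p-3}$. The all-zero vector contributes $1$. For a nonzero solution $x$, set $I = \{i : x_i \neq 0\}$; since $x_i^{p-1} = 1$ when $x_i \neq 0$ and $0$ otherwise, the equations $P_1(x) = \dots = P_4(x) = 0$ translate to $|I| \equiv 0 \pmod p$ and $\sum_{i \in I}(a_i,b_i,c_i) \equiv (0,0,0) \pmod p$, i.e., $I$ indexes a zero-sum subsequence whose length is a positive multiple of $p$. Because $0 < |I| \leq |J| \leq 4p-1$, the only admissible lengths are $p$, $2p$, $3p$, and for each such index set, the $|I|$ coordinates in $I$ independently take any of $p-1$ nonzero values, giving $(p-1)^{|I|}$ solutions per index set. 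Summing yields
\begin{equation*}
1 + (p-1)^{p}(p \mid J) + (p-1)^{2p}(2p \mid J) + (p-1)^{3p}(3p \mid J) \equiv 0 \pmod p.
\end{equation*}
Finally, since $p$ is odd we have $(p-1)^{kp} \equiv (-1)^{kp} \equiv (-1)^k \pmod p$, which simplifies the display to the desired congruence $1 - (p \mid J) + (2p \mid J) - (3p \mid J) \equiv 0 \pmod p$.

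There is no serious obstacle here, but two bookkeeping points deserve care. First, the upper bound $|J| \leq 4p-1$ is essential: it is exactly what ensures the subsequence-length $4p$ cannot appear in the sum, so the alternating sum stops at $(3p \mid J)$. Second, the lower bound $|J| \geq 4p-3$ is what makes the strict inequality $4(p-1) < |J|$ in Chevalley--Warning hold without having to adjoin an extra variable; this is precisely the asymmetry that forced the $(p-1 \mid J), (2p-1 \mid J), (3p-1 \mid J)$ terms to appear in Proposition \ref{primeprop} but not in the corollary.
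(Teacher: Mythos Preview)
Your proof is correct and is exactly the argument the paper intends: the phrase ``This proof leads us to the following corollary'' signals that one reruns the Chevalley--Warning count of Proposition~\ref{primeprop}, and your observation that for $|J|\ge 4p-3$ the degree bound $4(p-1)<|J|$ already holds---so the auxiliary variable $t_{4p-3}$ and its attendant $(kp-1\mid J)$ terms disappear---is precisely the point.  The two bookkeeping remarks you flag (the role of $|J|\le 4p-1$ in capping $|I|$ at $3p$, and of $|J|\ge 4p-3$ in removing the padding) are the whole content of the passage from proposition to corollary.
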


\begin{cor}
Suppose $J$ is a zero-sum sequence in $(\Z/p\Z)^3$ and $|J| = 4p$. Then $(p \mid J) > 0$ or $(2p \mid J) > 0$. 
\end{cor}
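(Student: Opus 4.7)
The plan is to derive the statement from Corollary \ref{primecor} by a contradiction argument, exploiting the fact that $J$ is zero-sum so that complementation interchanges the counts $(kp \mid J)$ and $((4-k)p \mid J)$. Specifically, I will assume for contradiction that $(p \mid J) = 0$ (and show this forces $(2p \mid J) > 0$, which is equivalent to the desired dichotomy).

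First I would observe a symmetry: since $J$ is zero-sum and $|J| = 4p$, every zero-sum subsequence $S \subseteq J$ of length $kp$ has complement $J \setminus S$ of length $(4-k)p$ which is also zero-sum. In particular, $(3p \mid J) = (p \mid J)$ and $(2p \mid J)$ equals the number of zero-sum subsequences whose complements are zero-sum of length $2p$. Under the assumption $(p \mid J) = 0$ we immediately conclude $(3p \mid J) = 0$ as well.

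Next, fix any element $x \in J$ and set $J' := J \setminus \{x\}$, a sequence of length $4p - 1$. Any zero-sum subsequence of $J'$ of length $p$ or $3p$ is also a zero-sum subsequence of $J$ of the same length, so $(p \mid J') = 0$ and $(3p \mid J') = 0$. Applying Corollary \ref{primecor} to $J'$ then yields
\begin{equation*}
1 + (2p \mid J') \equiv 0 \pmod{p},
\end{equation*}
so $(2p \mid J') \equiv -1 \pmod{p}$, and in particular $(2p \mid J') \geq p - 1 \geq 1$. Any such zero-sum subsequence of length $2p$ in $J'$ is also one in $J$, hence $(2p \mid J) > 0$, completing the proof.

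The argument is short and the main step that requires care is the reduction from $|J| = 4p$ to $|J'| = 4p-1$, so that Corollary \ref{primecor} (which only covers lengths $4p-3, 4p-2, 4p-1$) becomes applicable; the rest is bookkeeping using the zero-sum hypothesis on $J$ to relate $(p \mid J)$ with $(3p \mid J)$. No new combinatorial input beyond the corollary is needed.
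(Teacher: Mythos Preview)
Your proof is correct and follows essentially the same approach as the paper: remove one element to reach length $4p-1$, apply Corollary~\ref{primecor}, and use the complementation symmetry $(p\mid J)=(3p\mid J)$ coming from the zero-sum hypothesis. The only cosmetic difference is the order of the steps: you invoke complementation first to kill $(3p\mid J')$ and then read off $(2p\mid J')>0$, whereas the paper assumes both $(p\mid J)=(2p\mid J)=0$, reads off $(3p\mid J')>0$, and then uses complementation at the end to derive the contradiction.
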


\begin{proof}
Let $x \in J$ be arbitrary. Suppose towards a contradiction that $(p \mid J) = 0$ and $(2p \mid J) = 0$. Then we must also have $(p \mid J -  \{x\}) = 0$ and $(2p \mid J - \{x\}) = 0$. Since $|J - \{x\}| = 4p-1$, by Corollary \ref{primecor}, $(3p \mid J - \{x\}) \equiv -1 \mod p$. So $(3p \mid J - \{x\}) > 0$. Since $J$ is zero sum, note that if there was a zero-sum subsequence of length $p$, its complement sequence of length $3p$ must also be zero-sum. In other words, 
\begin{equation*}
    (p \mid J) = (3p \mid J) \geq (3p \mid J - \{x\}) > 0,
\end{equation*}
contradicting $(p \mid J) = 0$. 
\end{proof}

Note that the preceding few results are amenable to the exact same methods for higher dimensions. In general, for $(\Z/p\Z)^d$, one would construct $(d+1)$ polynomials using the Chevalley-Warning method. This would yield the following: \\
If $|J| = (d+1)(p-1)$, then 
\begin{equation}
    1 + \sum_{k=1}^d (-1)^k\left((kp - 1 \mid J) + (kp \mid J)\right) \equiv 0 \mod p. 
\end{equation}
Furthermore, if $|J| = (d+1)p - m$ for some $1 \leq m \leq d$, then 
\begin{equation*}
    1 + \sum_{k=1}^d (-1)^k(kp \mid J) \equiv 0 \mod p.
\end{equation*}
Lastly, this would imply that if $J$ is zero sum and $|J| = (d+1)p$, then at least one of $(p \mid J), \dots, \left((d-1)p \mid J\right)$ is greater than zero. This leads us to the following corollary. 
\begin{cor}
Let $p$ be prime, $G = \left(\Z/p\Z\right)^d$, and $\mathcal{L} = \{p, 2p, \dots, (d-1)p\}$. Then 
\begin{equation*}
    s'_\mathcal{L}(G) \leq (d+1)p.
\end{equation*}
\end{cor}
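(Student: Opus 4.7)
The plan is to carry out the same contradiction argument given earlier in this section for $d=3$, now for general $d$. Since $s'_\mathcal{L}(G) \leq (d+1)p$ reduces (via the sentence preceding the corollary) to showing that every zero-sum sequence $J$ in $G = (\Z/p\Z)^d$ of length $(d+1)p$ has at least one of $(p \mid J), (2p \mid J), \ldots, ((d-1)p \mid J)$ positive, I fix a zero-sum $J$ with $|J|=(d+1)p$ and argue by contradiction.

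Suppose $(kp \mid J) = 0$ for every $k \in \{1, 2, \ldots, d-1\}$. Because $J$ is itself zero-sum of length $(d+1)p$, the map $S \mapsto J \setminus S$ is a bijection between zero-sum subsequences of $J$ of length $kp$ and those of length $(d+1-k)p$; hence $(kp \mid J) = ((d+1-k)p \mid J)$. As $k$ ranges over $\{1, \ldots, d-1\}$, the partner $d+1-k$ sweeps over $\{2, \ldots, d\}$, and so the contradiction hypothesis upgrades to $(kp \mid J) = 0$ for every $k \in \{1, 2, \ldots, d\}$.

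Now pick an arbitrary $x \in J$ and set $J' = J - \{x\}$. Then $|J'| = (d+1)p - 1$, which is the case $m = 1$ of the second Chevalley-Warning identity displayed just above the corollary:
\[1 + \sum_{k=1}^{d} (-1)^k (kp \mid J') \equiv 0 \pmod p.\]
Because every zero-sum subsequence of $J'$ is also a zero-sum subsequence of $J$, we have $(kp \mid J') \leq (kp \mid J) = 0$ for each $k$. The identity then collapses to $1 \equiv 0 \pmod p$, a contradiction, completing the proof.

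The only delicate step is the complementation move, which is exactly where the assumption that $J$ itself is zero-sum (baked into the definition of the modified EGZ constant) gets invoked; without it, the pairing $(kp \mid J) = ((d+1-k)p \mid J)$ would fail and the argument would break. Everything else is a direct plug-in into the Chevalley-Warning congruence derived in the preceding paragraph, so once that congruence is in hand the rest is pure bookkeeping.
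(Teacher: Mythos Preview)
Your proof is correct and is precisely the generalization of the paper's $d=3$ argument that the corollary is meant to record; the paper itself only sketches ``this would imply\ldots'' without writing out the details. The one cosmetic difference is that you invoke the complementation bijection $(kp\mid J)=((d+1-k)p\mid J)$ \emph{before} passing to $J'=J\setminus\{x\}$ (so that all terms in the Chevalley--Warning congruence vanish at once), whereas the paper's $d=3$ proof applies the congruence first to force $(3p\mid J')>0$ and only then uses complementation to contradict $(p\mid J)=0$; the two orderings are logically equivalent.
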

\section{Open Problems}
In 1973, Harborth \cite{H} considered the problem of computing $s_n\left((\Z/n\Z)^d\right)$ for higher dimensions. In particular, he proved the following bounds. 
\begin{thm}[Harborth, \cite{H}]\label{thm:Harborth}
We have
\begin{equation*}
    (n-1)2^d + 1 \leq s_n\left((\Z/n\Z)^d\right) \leq (n-1)n^d + 1.
\end{equation*}
In general the lower bound is not tight, but Harborth showed we have equality for $n=2^k$. 
\end{thm}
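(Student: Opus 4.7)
The plan is to handle the two bounds independently, as they require entirely unrelated ideas. The upper bound will come from a crude pigeonhole argument, while the lower bound will rely on an explicit Harborth-style construction over the Boolean cube $\{0,1\}^d$.

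\emph{Upper bound.} Let $J$ be any sequence in $(\Z/n\Z)^d$ of length $(n-1)n^d + 1$. Since $|(\Z/n\Z)^d| = n^d$, the pigeonhole principle forces some element $g \in (\Z/n\Z)^d$ to occur at least $n$ times in $J$ (otherwise the total length would be at most $(n-1)n^d$). Selecting $n$ copies of $g$ produces a subsequence of length $n$ whose sum is $ng = 0$ in $(\Z/n\Z)^d$. This gives $s_n\bigl((\Z/n\Z)^d\bigr) \leq (n-1)n^d + 1$.

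\emph{Lower bound.} I would construct an explicit sequence $J$ of length $(n-1)2^d$ avoiding any zero-sum subsequence of length $n$. For each of the $2^d$ vectors $v \in \{0,1\}^d \subseteq (\Z/n\Z)^d$, include exactly $n-1$ copies of $v$ in $J$. Suppose for contradiction that $v_1, \dots, v_n$ (chosen from $J$ with multiplicity) sum to $0$ in $(\Z/n\Z)^d$. For each coordinate $i \in \{1,\dots,d\}$, let $k_i$ denote the number of selected vectors whose $i$-th entry equals $1$; then $0 \leq k_i \leq n$, and the $i$-th coordinate of the sum equals $k_i$ modulo $n$. The zero-sum condition forces $k_i \in \{0,n\}$ for every $i$, meaning that in each coordinate all $n$ chosen vectors agree. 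Hence $v_1 = v_2 = \cdots = v_n$, which is impossible because each Boolean vector was included only $n-1$ times. Thus $J$ has no zero-sum subsequence of length $n$, which gives $s_n\bigl((\Z/n\Z)^d\bigr) \geq (n-1)2^d + 1$.

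\emph{Main obstacle.} Neither direction is technically difficult; the crux is the combinatorial observation underpinning the lower bound, namely that a $\{0,1\}$-valued sum of $n$ terms can be divisible by $n$ only if it is $0$ or $n$, which collapses the chosen subsequence to copies of a single vector. This rigidity is exactly what makes the Boolean cube the extremal object and explains the appearance of the factor $2^d$ in Harborth's bound.
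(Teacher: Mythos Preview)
Your argument is correct on both counts: the pigeonhole step for the upper bound and the Boolean-cube construction for the lower bound are exactly the classical Harborth arguments, and your coordinate analysis showing that $k_i\in\{0,n\}$ forces all $n$ chosen vectors to coincide is the standard way to see that the construction works. Note, however, that the paper does not supply its own proof of this statement; Theorem~\ref{thm:Harborth} is quoted in the Open Problems section as a result of Harborth~\cite{H}, so there is no in-paper proof to compare against---what you have written is essentially Harborth's original proof.
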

In 2019, this was improved by Naslund resulting in the following bounds.
\begin{thm}[Naslund, \cite{N}]\label{thm:Naslund}
\begin{equation*}
    s_p(\F_p^n) \leq (p-1)2^p(J(p)\cdot p)^n,
\end{equation*}
where $J(p)$ is a constant satisfying $0.8414 < J(p) < 0.91837$. 
\end{thm}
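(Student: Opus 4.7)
The plan is to adapt the slice-rank / partition-rank polynomial method, in the lineage of Croot--Lev--Pach, Ellenberg--Gijswijt, and Tao, to the zero-sum setting, following Naslund's original approach. Suppose $A = (v_1, \ldots, v_N)$ is a sequence in $\F_p^n$ with no zero-sum subsequence of length $p$. The strategy is to associate a $p$-fold tensor over $\F_p$ to $A$ whose partition rank can be sandwiched between a combinatorial lower bound from the zero-sum-free hypothesis and an algebraic upper bound from polynomial degree considerations; comparing the two forces $N$ to be small.

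First, I would use Fermat's little theorem to build the indicator tensor $T \in \F_p^{N^{\otimes p}}$ with entries
\[
T(i_1, \ldots, i_p) \;=\; \prod_{j=1}^n \Bigl(1 - \bigl(v_{i_1, j} + \cdots + v_{i_p, j}\bigr)^{p-1}\Bigr),
\]
which equals $1$ precisely when $v_{i_1} + \cdots + v_{i_p} = 0$. Expanded, $T$ is a polynomial of total degree at most $n(p-1)$ in the $np$ scalar variables, each variable having degree at most $p-1$. The $p$ blocks of $n$ variables contribute individual degrees summing to at most $n(p-1)$, so by pigeonhole some block has degree at most $n(p-1)/p$. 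Grouping monomials by which block attains this minimum expresses $T$ as a sum of partition-rank-one terms (each depending on only one of the $p$ tensor coordinates). The number of summands is controlled by the count of monomials in $n$ variables with per-variable degree at most $p-1$ and total degree at most $n(p-1)/p$. Standard generating-function or entropy estimates show this count grows like $(J(p)\cdot p)^n$, where $J(p)$ is defined via an explicit convex optimization on the probability simplex in $\R^p$. Tracking constants (the $p-1$ nonzero residues used in normalizing, and the $2^p$ arising from the combinatorial choice of which blocks are ``heavy'' versus ``light'' at each pigeonhole step) yields the claimed upper bound $(p-1) 2^p (J(p)\cdot p)^n$ on the partition rank.

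Second, I would produce a matching lower bound on the partition rank of $T$ using the zero-sum-free hypothesis. Since $p v_i = 0$ in $\F_p^n$, every diagonal entry $T(i,\ldots,i)$ equals $1$, while the zero-sum-free property forces $T(i_1,\ldots,i_p) = 0$ whenever the $v_{i_k}$ are distinct and sum to nonzero (in particular whenever the indices are pairwise distinct). Entries with $k < p$ distinct indices must be absorbed into a low-partition-rank correction via symmetric slice subtractions, in the spirit of the near-diagonal handling in Ellenberg--Gijswijt. After this subtraction, the residual tensor is supported on the true diagonal with $N$ nonzero entries, and the identity-tensor partition-rank lower bound due to Tao shows it has partition rank exactly $N$. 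Comparing both estimates gives $N \leq (p-1) 2^p (J(p)\cdot p)^n$, which is the stated bound.

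The main obstacle is the entropy-optimization problem that pins down $J(p)$: one must analyze
\[
J(p) \;=\; \min_{\substack{t_0, \ldots, t_{p-1} \geq 0 \\ t_0 + \cdots + t_{p-1} = 1}} \Phi(t_0, \ldots, t_{p-1})
\]
for the function $\Phi$ arising from the multinomial count, and verify the uniform bracket $0.8414 < J(p) < 0.91837$ by a careful calculus argument combined with numerical bounds. A secondary technical difficulty is cleanly discarding the contributions from tuples with $k < p$ distinct indices; unlike the $p = 3$ cap-set case where only the genuine diagonal matters, there are several intermediate strata, each of which must be shown to contribute only low partition rank before the identity-tensor lower bound can be invoked on the true diagonal. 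These two points are the technical heart of Naslund's proof \cite{N}.
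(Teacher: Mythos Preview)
The paper does not actually prove this theorem. Theorem~\ref{thm:Naslund} appears only in the ``Open Problems'' section as a cited result of Naslund~\cite{N}, quoted to provide context for the conjectures that follow about higher-dimensional modified EGZ constants; no argument for it is given anywhere in the paper, so there is nothing to compare your proposal against.

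That said, your sketch accurately tracks the structure of Naslund's original argument: the partition-rank method applied to the $p$-fold indicator tensor built from Fermat's little theorem, an upper bound on the rank via the monomial count that produces the constant $J(p)$, and a lower bound from the diagonal after handling tuples with repeated indices. The two technical points you flag --- the entropy optimization pinning down $J(p)$ and the stratified treatment of non-distinct index tuples --- are indeed where the work lies in~\cite{N}. If your goal is to supply a proof for this paper, however, that would be unnecessary: the theorem is invoked only as background, and a citation is all the paper intends.
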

In 2019, Berger and Wang made the following conjecture.
\begin{conj}[Conjecture 4.2, \cite{BW}]\label{conj:BW}
If $n=2^k$ and $d \geq 1$, we have
\begin{equation*}
    s'_n\left((\Z/n\Z)^d\right) = 2^dn - \ell + 1,
\end{equation*}
where $\ell$ is the smallest integer such that $\ell \geq 2^d$ and $\ell \nmid n$. 
\end{conj}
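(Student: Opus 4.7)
The plan is to split by comparing $d$ and $k$: the target value of $s'_n((\Z/n\Z)^d)$ is $2^d(n-1)$ when $d\leq k$ (where $\ell=2^d+1$, the smallest non-power of $2$ that is $\geq 2^d$) and $(n-1)2^d+1$ when $d>k$ (where $\ell=2^d$ itself fails to divide $n=2^k$). In each range I would prove the matching lower and upper bounds separately.

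For the lower bound, the candidate construction starts from the Harborth-type extremal sequence in which each $v \in \{0,1\}^d \subset (\Z/n\Z)^d$ appears $n-1$ times: this has length $2^d(n-1)$, and a coordinate-wise divisibility argument (any hypothetical zero-sum $n$-subsequence $\sum c_v v \equiv 0 \pmod n$ with $c_v \leq n-1$ forces $\sum_{v_i=1} c_v \in \{0,n\}$ for each $i$, collapsing the support to a single vector taken $n$ times, a contradiction) shows it has no zero-sum subsequence of length $n$. When $d > k$ this sequence is already zero-sum (each coordinate sum $(n-1)2^{d-1}$ is divisible by $2^k$), finishing that range. When $d \leq k$, first reduce the multiplicity of $(0,\dots,0)$ to $n-2$ to obtain length $2^d(n-1)-1$, then shift every element by a diagonal vector $(r,\dots,r)$; this shift preserves the no-zero-sum-$n$-subsequence property because $n(r,\dots,r)\equiv 0 \pmod n$, and the zero-sum condition reduces to $(2^d+1)r\equiv -2^{d-1}\pmod{2^k}$, which is solvable because $2^d+1$ is odd and hence invertible modulo $2^k$.

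For the upper bound, the range $d>k$ is immediate: the target length $(n-1)2^d+1$ equals the unmodified EGZ constant of $(\Z/n\Z)^d$ by Theorem \ref{thm:Harborth} (Harborth's lower bound is tight for $n=2^k$), so every sequence, zero-sum or not, contains a zero-sum $n$-subsequence. The substantive range is $d\leq k$, where the target length $2^d(n-1)$ is exactly one less than the EGZ constant and the zero-sum hypothesis is essential. Proceed by induction on $k$, with Theorems \ref{thm:BW1} and \ref{thm:BW2} covering $d=1$ and $d=2$ and the base $k=1$ falling in the easy range $d\geq k$. In the inductive step, mimic the subgroup strategy of Proposition \ref{n1n2prop}: given zero-sum $J$ of length $2^d(n-1)$, iteratively extract length-$2$ subsequences of elements that agree modulo $2$, and record each pair-sum as $2x_i$ with $x_i\in 2(\Z/n\Z)^d\cong (\Z/(n/2)\Z)^d$. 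A zero-sum length-$(n/2)$ subcollection of the $x_i$, obtained by applying the inductive hypothesis for $n/2=2^{k-1}$ in $(\Z/(n/2)\Z)^d$, pulls back to the desired zero-sum $n$-subsequence of $J$.

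The main obstacle is a tight count: the naive pigeonhole procedure extracts at least $M=2^{d-1}(n-2)=2^{d-1}n-2^d$ blocks, which is exactly the inductive value $s'_{n/2}((\Z/(n/2)\Z)^d)$ when $d\leq k-1$, so the inductive hypothesis just barely applies---but only provided $\{x_i\}$ is itself zero-sum in $(\Z/(n/2)\Z)^d$, equivalently that the residual portion of $J$ (of length $2^d$) has sum zero modulo $n$ rather than merely modulo $2$. Arranging the extractions so that this residual condition can be forced to hold, or alternatively squeezing out one additional pair beyond the pigeonhole threshold by exploiting the zero-sum residual modulo $2$ in the spirit of Lemma \ref{lem:BWlemma3.4}, is the delicate combinatorial step, and is presumably why Conjecture \ref{conj:BW} has remained open.
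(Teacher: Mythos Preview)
The statement you are attempting to prove is not proved in the paper: it appears in the ``Open Problems'' section as Conjecture~\ref{conj:BW}, attributed to Berger and Wang, and the paper offers no argument for it. So there is no proof in the paper to compare your proposal against.

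On the substance of your proposal: your case split and your lower-bound constructions are correct. The Harborth-type sequence really has no zero-sum $n$-subsequence (your $S_i\in\{0,n\}$ argument pins the support to a single vertex of the cube, forcing multiplicity $n$), it is already zero-sum precisely when $d>k$, and for $d\le k$ your deletion-plus-diagonal-shift trick works because $2^d+1$ is a unit modulo $2^k$. The upper bound in the range $d>k$ is likewise immediate from Harborth's exact value of $s_n((\Z/n\Z)^d)$ for $n=2^k$.

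The genuine gap is exactly the one you flag, and it is not a technicality. In the inductive step for $d\le k$ you need the block sequence $(x_i)$ in $(\Z/(n/2)\Z)^d$ to be zero-sum in order to invoke the \emph{modified} constant, and that is equivalent to the residual (the at most $2^d$ unpaired elements of $J$) summing to zero in $(\Z/n\Z)^d$, not merely in $(\Z/2\Z)^d$. Nothing in the greedy pairing forces this; one can easily arrange residuals with nonzero sum. Your alternative, ``squeeze out one more pair,'' also runs into trouble: at the threshold $d=k$ the quotient group falls into the other regime, where the inductive value is $2^{d-1}(n-2)+1$, so you are short by one block even before worrying about zero-sum-ness, and an analogue of Lemma~\ref{lem:BWlemma3.4} for $(\Z/2\Z)^d$ with $d\ge 3$ is not available in the paper. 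In short, your outline reproduces the known easy halves of the conjecture and correctly isolates the obstruction, but does not supply the missing idea; the paper does not supply it either.
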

We make the following conjecture.
\begin{conj}
Let $n, t, d \geq 1$ be positive integers. We have
\begin{equation*}
    s'_{nt}\left(\Z/n\Z\right)^d \leq (t + 2^d - 1)n - \ell + 1,
\end{equation*}
where $\ell$ is the smallest integer such that $\ell \geq 2^d$ and $\ell \nmid n$. 
\end{conj}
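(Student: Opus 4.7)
The plan is to prove the conjecture by induction on $t$, reducing it to the base case $t = 1$, and then to attack that base case by an inductive lifting argument on the exponent $n$ in the spirit of Proposition \ref{n1n2prop}, generalized from dimension two to arbitrary dimension $d$.

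For the reduction to $t = 1$: once we have established $s'_n((\Z/n\Z)^d) \leq 2^d n - \ell + 1$, a zero-sum sequence $J$ of length $(t + 2^d - 1)n - \ell + 1$ admits $t$ successive extractions of zero-sum subsequences of length $n$. Indeed, after $k$ such extractions the remainder is still zero-sum (since $J$ was) and has length $(t + 2^d - 1 - k)n - \ell + 1$, which meets the $t = 1$ threshold $2^d n - \ell + 1$ precisely when $k \leq t - 1$; reassembling the $t$ extracted pieces then yields a zero-sum subsequence of $J$ of length $nt$.

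For the base case, I would induct on $n$ following Proposition \ref{n1n2prop}. Choose a divisor $d_0 \mid n$ — when $\ell > 2^d$ the natural choice is $d_0 = \ell - 1$, which divides $n$ by minimality of $\ell$ — and write $n = d_0 m$. From a zero-sum sequence of length $2^d n - \ell + 1$, repeatedly break off subsequences of length $d_0$ summing to zero modulo $d_0$, using a $d$-dimensional EGZ bound for $s_{d_0}((\Z/d_0\Z)^d)$. The resulting block-sums lie in the subgroup $d_0 (\Z/n\Z)^d \cong (\Z/m\Z)^d$, and an appropriately counted application of the inductive hypothesis in the smaller-exponent group $(\Z/m\Z)^d$ should locate $m$ of these blocks whose combined sum vanishes in $(\Z/n\Z)^d$, producing a zero-sum subsequence of length $d_0 m = n$.

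The main obstacle is the base case, and specifically the absence of a sharp enough $d$-dimensional analogue of Reiher's Theorem \ref{thm:reiher}. For $d = 2$ the equality $s_n((\Z/n\Z)^2) = 4n - 3$ drives the block-decomposition cleanly, and in dimension $d = 3$ the Chevalley--Warning identities developed at the end of Section 5 already suggest the right order of magnitude. For $d \geq 4$, however, the best available general bounds (e.g.\ Naslund's Theorem \ref{thm:Naslund}) carry exponential loss and appear inadequate to preserve the precise $-\ell + 1$ savings across the inductive step. Overcoming this --- either by establishing a sharp $d$-dimensional Kemnitz-type estimate or by extending the Chevalley--Warning framework with a more delicate count that yields the $-\ell + 1$ term directly --- will be the principal technical hurdle.
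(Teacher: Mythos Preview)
The statement you are attempting is labeled a \emph{conjecture} in the paper and appears in the Open Problems section; the paper offers no proof, so there is nothing to compare your argument against. Your proposal is accordingly not a proof but a strategy outline, and you yourself flag its incompleteness.

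That said, the architecture is sound as far as it goes. The reduction from general $t$ to $t=1$ by iterated extraction of zero-sum $n$-blocks is correct and matches exactly the device used in Corollaries~\ref{ntcor} (commented), \ref{cor:n2t}, and \ref{cor:egznonprime}. The lifting strategy for the base case --- choose a divisor $d_0\mid n$, peel off $d_0$-blocks that are zero modulo $d_0$, and apply induction in $(\Z/m\Z)^d$ --- is the natural $d$-dimensional extension of Proposition~\ref{n1n2prop}.

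The gap is precisely the one you name: to make the block decomposition go through you need control on $s_{d_0}\bigl((\Z/d_0\Z)^d\bigr)$ sharp enough to produce the required number of blocks, and for $d\ge 3$ no such bound is known. Harborth's inequality (Theorem~\ref{thm:Harborth}) and Naslund's estimate (Theorem~\ref{thm:Naslund}) are far too lossy to preserve the $-\ell+1$ term, and even the Chevalley--Warning identities of Section~5 only yield information about $s'_{\mathcal{L}}$ for the range $\mathcal{L}=\{p,\dots,(d-1)p\}$, not about $s_p$ itself. You would in addition need a $d$-dimensional analogue of Lemma~\ref{lem:BWlemma3.4} (zero-sum sequences of length $(2^d-1)n$ contain a zero-sum $n$-subsequence) to handle the last block, as in Proposition~\ref{n1n2prop}. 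Until a Kemnitz-type result in dimension $d$ is available, this remains a genuine conjecture, and your outline should be read as evidence for its plausibility rather than as a proof.
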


We also have not determined the EGZ constant $s_{nt}\left((\Z/n\Z)^2\right)$ for non-prime $n$. 
\begin{prob}
Compute $s_{nt}\left((\Z/n\Z)^2\right)$ for non-prime $n$ and $t \geq 2$. 
\end{prob}
\section*{Acknowledgements}
This research was conducted at the University of Minnesota Duluth REU and was supported by NSF / DMS grant 1659047 and NSA grant H98230-18-1-0010. I would like to thank Joe Gallian for running the program and Aaron Berger for helpful conversations and encouragement.

\bigskip

\end{document}